\def\mequal{\mathrel{\mathpalette\@mvereq{\hbox{\sevenrm m}}}}
\undefined\RequirePackage{dsfont}\fi
\undefined\RequirePackage{amsmath,amsfonts,amssymb,amsthm}\fi
\title{{\Large \bfseries{Exponential integrability and exit times of diffusions on sub-Riemannian and metric measure spaces}}}
\author{Anton Thalmaier and James Thompson\footnote{University of Luxembourg, Email: \texttt{james.thompson@uni.lu}}}
\date{\today}
\def\@MRExtract#1 #2!{#1}
\newcommand{\MR}[1]{
  \xdef\@MRSTRIP{\@MRExtract#1 !}%
  \href{http://www.ams.org/mathscinet-getitem?mr=\@MRSTRIP}{MR-\@MRSTRIP}}
\renewenvironment{thebibliography}[1]{%
  \section*{\refname
    \@mkboth{\MakeUppercase\refname}{\MakeUppercase\refname}}%
  \phantomsection%
  \addcontentsline{toc}{section}{\refname}%
  \list{\@biblabel{\@arabic\c@enumiv}}%
  {\settowidth\labelwidth{\@biblabel{#1}}%
    \small%
    \setlength{\labelsep}{0.4em}%
    \setlength{\leftmargin}{\labelwidth}%
    \addtolength{\leftmargin}{\labelsep}%
    \setlength{\itemsep}{-.25em}%
    \@openbib@code
    \usecounter{enumiv}%
    \let\p@enumiv\@empty
    \renewcommand\theenumiv{\@arabic\c@enumiv}}%
  \sloppy\clubpenalty4000\@clubpenalty\clubpenalty\widowpenalty4000%
  \sfcode`\.\@m}{%
  \def\@noitemerr{%
    \@latex@warning{Empty `thebibliography' environment}}%
  \endlist}
\let\mathbb=\mathds
\DeclareMathOperator{\Cut}{Cut}
\DeclareMathOperator{\m}{\mathfrak{m}}
\DeclareMathOperator{\supp}{supp}
\def\<{\langle}
\def\>{\rangle}
\def\Ricbf{\mathop{\mathbf{Ric}}}
\newcommand{\dimm}{m}
\newcommand{\sv}{k}
\newcommand{\mom}{p}
\newcommand{\nv}{h}
\newcommand{\E}{\mathbb{E}}
\newcommand{\R}{\mathbb{R}}
\DeclarePairedDelimiter\floor{\lfloor}{\rfloor}
\newtheorem{theorem}{Theorem}[section]
\newtheorem{lemma}[theorem]{Lemma}
\newtheorem{proposition}[theorem]{Proposition}
\newtheorem{corollary}[theorem]{Corollary}
\newtheorem{definition}[theorem]{Definition}
\begin{document}

\maketitle

\begin{abstract}%
   \noindent%
   {In this article we derive moment estimates, exponential integrability, concentration inequalities and exit times estimates for canonical diffusions in two settings each beyond the scope of Riemannian geometry. Firstly, we consider sub-Riemannian limits of Riemannian foliations. Secondly, we consider the non-smooth setting of $\mathrm{RCD}^*(K,N)$ spaces. In each case the necessary ingredients are an It\^{o} formula and a comparison theorem for the Laplacian, for which we refer to the recent literature. As an application, we derive pointwise Carmona-type estimates on eigenfunctions of Schr\"{o}dinger operators.}\\[1em]%
   {\footnotesize%
     \textbf{Keywords: }{sub-Riemannian ; RCD space ; exponential integrability; concentration inequality; exit time; Schr\"{o}dinger; eigenfunction; Kato}\par%
     \noindent\textbf{AMS MSC 2010: }%
     {58J65; 53C17; 53C20; 53C21; 53C23; 60J25; 60J60; 60J55; 35P05}\par
   }
 \end{abstract}

\section*{Introduction}

Suppose a diffusion operator $L$ satisfies an inequality of the form $L \phi \leq \nu + \lambda \phi$ for some constants $\nu,\lambda$ and a suitable function $\phi$. Then, given either a suitable Kolmogorov equation or, better yet, an It\^{o} formula for the corresponding diffusion $X$, one should expect to be able to calculate various estimates on the moments of the random variable $\phi(X_t)$. In the recent article \cite{Thompson2016}, the second author considered the case of a complete Riemannian manifold with $L=\frac{1}{2}\Delta + Z$ where $Z$ is a smooth vector field and $\Delta$ the Laplace-Beltrami operator. The function $\phi$ was taken to be the square of the distance to either a fixed point or, more generally, a submanifold. Geometric conditions were given under which suitable constants $\nu,\lambda$ could be determined explicitly. In addition to moment estimates and an exponential integrability condition, a concentration inequality and exit time estimate for tubular neighbourhoods were also derived. In this paper, we consider two further situations where the ingredients required for such calculations are also available.

Firstly, in Section \ref{sec:subriem}, we look at the sub-Riemannian limit as $\epsilon \downarrow 0$ of a sequence of Riemannian metrics $g_\epsilon = g_{\mathcal{H}} \oplus \frac{1}{\epsilon} g_{\mathcal{V}}$ and assume a condition of the form
\begin{equation}\label{eq:quadbound}
\tfrac{1}{2}\Delta_{\mathcal{H}} r_\epsilon^2 \leq {\nu_\epsilon} + {\lambda_\epsilon} r_\epsilon^2
\end{equation}
for the horizontal Laplacian $\Delta_{\mathcal{H}}$. Here $r_\epsilon$ is the distance to some fixed point $x_0$ with respect to the metric $g_\epsilon$. Example \ref{ex:reeb} illustrates that such constants can be found if the Riemannian foliation is given by the Reeb foliation of a Sasakian structure with curvature bounded below. This observation is based on the Laplacian comparison theorem recently proved by Baudoin, Grong, Kuwada and Thalmaier in \cite{BGKT2017}. The It\^{o} formula is taken from their next article \cite{BGKT2018}, currently in preparation. The main results in Section \ref{sec:subriem} include the exponential estimate Theorem \ref{th:expenbm}, which for the subelliptic diffusion process $X$ generated by $\Delta_{\mathcal{H}}$ implies
\begin{equation}
\mathbb{E}^x \left[e^{\frac{\theta}{2} r_0^2 (X_t)}\right] \leq \left(1-\theta t\Lambda(t)\right)^{-\frac{{\nu}}{2}}\exp\left(\frac{\theta r_0^2(x)e^{{\lambda} t}}{2(1-\theta t \Lambda(t))}\right)
\end{equation}
for all $t,\theta \geq 0$ such that $\theta t\Lambda(t)<1$ where ${\Lambda}(t):=(e^{{\lambda} t}-1)/{\lambda t}$. Here $r_0, \nu$ and $\lambda$ are given as limits of $r_\epsilon, \nu_\epsilon$ and $\lambda_\epsilon$ as $\epsilon \downarrow 0$. There is also the concentration inequality Theorem \ref{th:concenthm}, which states for the sub-Riemannian ball $B^0_r(x_0)$ that
\begin{equation}
\lim_{r\rightarrow \infty} \frac{1}{r^2}\log \mathbb{P}^x \lbrace X_t \notin B^0_r(x_0) \rbrace \leq -\frac{1}{2t\Lambda(t)}
\end{equation}
and the exit time estimate of Theorem \ref{thm:exittime}, which is to the best of the authors' knowledge the first such exit time estimate proved in the sub-Riemannian setting.

Secondly, in Section \ref{sec:mms}, we look at the general setting of a metric measure space $(X,d,\mathfrak{m})$ satisfying the $\mathrm{RCD}^*(K,N)$ condition, with associated canonical diffusion $X$. The It\^{o} formula for the distance function, which in this case has a Laplacian comparison built into it, has been proved by Kuwada and Kuwae and can be found in \cite{KK2018}. This yields an exponential estimate, concentration inequality and, finally, exit time estimate
\begin{equation}
\mathbb{P}^x\bigg\lbrace \sup_{s\in \left[0,t\right]} r(X_s) \geq r \bigg\rbrace\leq (1-\delta)^{-\frac{{N+\lambda}}{2}}\exp\left(\frac{r^2(x)\delta e^{\lambda t}}{2(1-\delta)t\Lambda(t)}-\frac{\delta r^2}{2t\Lambda(t)}\right)
\end{equation}
for all $t>0$ and $\delta \in (0,1)$ where $\lambda := \frac{1}{2}\sqrt{(N-1)K^-}$. Here again $r(y) :=d(y,x_0)$ for some fixed point $x_0$. For the case $K=0$, we give an upper bound for the law of the iterated logarithm. Following this we present, as an application, Carmona-type upper estimates on Schr\"{o}dinger eigenfunctions. To do so we start, in Subsection \ref{ss:Feynman}, by describing conditions under which for each $\rho>0$ and $p>1$ there exists a positive constant $C_3(p,\rho)$ such that if $V \in L^p(X)$ is non-negative then
\begin{equation}
\E^x \left[\exp\left(\int_0^t V(X_s)ds\right)\right] \leq \exp\left( \rho\left(  t\, C_3(p,\rho) \left(1\vee\|V\|_{p}^{\frac{1}{\gamma}}\right) +1 \right)\right)
\end{equation}
for all $t\geq 0$ and all $x \in \supp \m$. The constant can be determined precisely, by Theorem \ref{thm:lpestv}. Combined with the exit time estimate described above, we then, in Subsection \ref{ss:upperbnds}, deduce the upper estimates for the eigenfunctions.

It would be desirable to find a general framework that ecompasses all three settings mentioned above (namely, the distance to a submanifold, considered in \cite{Thompson2016}, the sub-Riemannian distance, considered in Section \ref{sec:subriem} and the distance in an $\mathrm{RCD}^*(K,N)$ space, considered in Section \ref{sec:mms}). However, for the time being, no all-encompasing It\^{o} formula nor comparison theorem is to be found in the literature. Moreover, while the $\mathrm{RCD}^*(K,N)$ condition generalizes the concept of a Ricci lower bound $K$, the assumptions in \cite{Thompson2016} allow for unbounded curvature and in principle the same is true for the sub-Riemannian setting considered in Section \ref{sec:subriem}. Indeed, condition \eqref{eq:quadbound} is still satisfied if the curvature operators are bounded below merely by a negative quadratic in the distance function (as in \cite[Theorem 3.11]{BGKT2018}). So, instead of attempting to formulate a general approach to moment estimates for Markov processes, we focus on the two examples outlined above: sub-Riemannian limits of Riemannian foliations and $\mathrm{RCD}^*(K, N)$ spaces.

The authors wish to thank Erlend Grong for helpful discussions concerning Section \ref{sec:subriem} and Batu G\"{u}neysu for suggesting the application to Schr\"{o}dinger eigenfunctions.

\section{Riemannian foliations}\label{sec:subriem}

Suppose $(M,g)$ is a complete and connected Riemannian manifold of dimension $n+m$ equipped with a Riemannian foliation $\mathcal{F}$ having $m$-dimensional totally geodesic leaves. Let $\mathcal{V}$ be the integrable sub-bundle tangent to the leaves of $\mathcal{F}$ and denote by $\mathcal{H}$ its orthogonal complement with respect to $g$. Consider the canonical variation $g_\epsilon$ defined by
\begin{equation}
g_\epsilon := g_{\mathcal{H}} \oplus \frac{1}{\epsilon} g_{\mathcal{V}}
\end{equation}
where $g_{\mathcal{H}} := g \vert \mathcal{H}$ and $g_{\mathcal{V}} := g \vert \mathcal{V}$ for $\epsilon >0$. The limit $\epsilon \downarrow 0$ is called the sub-Riemannian limit. For each $\epsilon>0$, the Riemannian distance associated with $g_\epsilon$ will be denoted $d_\epsilon$. As $\epsilon \downarrow 0$, these distances form an increasing sequence converging pointwise to the sub-Riemannian distance $d_0$. Now let $x_0 \in M$ be fixed and for $\epsilon \geq 0$ denote
\begin{equation}
r_\epsilon(x) := d_\epsilon(x_0,x).
\end{equation}
The cut-locus $\Cut_\epsilon (x_0)$ of $x_0$ for the distance $d_\epsilon$ is defined as the complement of the set of points $y$ in $M$ for which there exists a unique length minimizing geodesic connecting $x_0$ with $y$ and such that $x_0$ and $y$ are not conjugate. The global cut locus $\Cut_\epsilon(M)$ is defined by
\begin{equation}
\Cut_\epsilon(M) := \lbrace (x,y) \in M \times M: y \in \Cut_\epsilon (x) \rbrace.
\end{equation}
It is well-known that the set $M \setminus \Cut_\epsilon(x_0)$ is open and dense in $M$, and that the function $d_\epsilon^2$ is smooth on $(M\times M)\setminus \Cut_\epsilon (M)$.

\subsection{Comparison theorems}

If $\nabla$ denotes the Riemannian gradient determined by $g$ then the projection of $\nabla$ to $\mathcal{H}$ will be denoted by $\nabla_{\mathcal{H}}$ and called the horizontal gradient. The horizontal Laplacian $\Delta_{\mathcal{H}}$ is then the generator of the symmetric closable bilinear form
\begin{equation}
\mathcal{E}_{\mathcal{H}}(f,g) = -\int_M g_{\mathcal{H}}(\nabla_{\mathcal{H}} f,\nabla_{\mathcal{H}} g) d\mu
\end{equation}
where $\mu$ denotes the Riemannian measure determined by $g$. We will suppose that there exist constants ${\nu_\epsilon} \geq 1$ and ${\lambda_\epsilon} \in \mathbb{R}$ such that inequality \eqref{eq:quadbound} holds on $M\setminus \Cut_\epsilon (x_0)$. Precise geometric conditions under which inequality \eqref{eq:quadbound} holds can be derived using, for example, comparison theorems for the horizontal Laplacian $\Delta_{\mathcal{H}}$ such as those presented in \cite{BGKT2017} for foliations of a Sasakian type, as explained in the following example.

\paragraph*{Example 1.1}\label{ex:reeb}
Suppose that the Riemannian foliation is the Reeb foliation of a Sasakian structure. Denote the Reeb vector field by $S$, the complex structure by $\mathbf{J}$ and denote by ${\Ricbf}_\mathcal{H}$ the horizontal Ricci curvature of the Bott connection. Furthermore, for $X \in \Gamma^\infty(\mathcal{H})$ with $\| X\| = 1$, set
\begin{equation}
\mathbf{K}_{\mathcal{H},J}(X,X) := \langle R(X,\mathbf{J}X)\mathbf{J}X,X\rangle_{\mathcal{H}},
\end{equation}
a quantity that is sometimes called the pseudo-Hermitian sectional curvature of the Sasakian manifold, and define
\begin{equation}
{\Ricbf}_{\mathcal{H},J^\perp}(X,X) := {\Ricbf}_\mathcal{H}(X,X) - \mathbf{K}_{\mathcal{H},J}(X,X).
\end{equation}
Suppose $k_1,k_2 \in \mathbb{R}$ are constants such that
\begin{equation}
\mathbf{K}_{\mathcal{H},J}(X,X) \geq k_1, \quad {\Ricbf}_{\mathcal{H},J^\perp}(X,X) \geq (n-2)k_2
\end{equation}
for all $X \in \Gamma^\infty(\mathcal{H})$ with $\| X\| = 1$. Then, in terms of the functions
\begin{equation}
\phi_{\mu}(r) :=
\begin{cases}
\frac{\sinh \sqrt{\mu} r}{\sqrt{\mu}} & \text{if } \mu >0 \\
r & \text{if } \mu =0 \\
\frac{\sin \sqrt{|\mu|} r}{\sqrt{|\mu|}} & \text{if } \mu <0 \\
\end{cases},
\quad
\Psi_{\mu}(r) :=
\begin{cases}
\frac{1}{\mu^{3/2}}\left( \sqrt{\mu}-\frac{1}{r}\tanh \sqrt{\mu}r\right) & \text{if } \mu >0 \\
\frac{1}{3} r^2 & \text{if } \mu =0 \\
\frac{1}{|\mu|^{3/2}}\left( \frac{1}{r}\tan \sqrt{|\mu|}r - \sqrt{|\mu|}\right) & \text{if } \mu <0 \\
\end{cases},
\end{equation}
and ${\nv}_\epsilon := \| \nabla_{\mathcal{H}} r_\epsilon \|^2$, \cite[Theorem~3.7]{BGKT2017} states:
\begin{equation}
\Delta_{\mathcal{H}} r_\epsilon \leq \frac{1}{r_\epsilon}\min \bigg \lbrace 1,\frac{1}{\nv_\epsilon}-1\bigg \rbrace + (n-2)\frac{\phi'_{-\nv_\epsilon k_2}(r_\epsilon)}{\phi_{-\nv_\epsilon k_2}(r_\epsilon)} + \frac{\phi'_{-\nv_\epsilon k_1}(r_\epsilon)}{\phi_{-\nv_\epsilon k_1}(r_\epsilon)} \frac{\nv_\epsilon \Psi_{-\nv_\epsilon k_1}(r_\epsilon)+\epsilon}{\nv_\epsilon \Psi_{-\nv_\epsilon k_1}(r_\epsilon /2)+\epsilon}
\end{equation}
on $M\setminus {\Cut}_\epsilon(x_0)$ and where $\nv_\epsilon >0$. In \cite[Theorem~3.1]{BGKT2017} it is shown how taking the limit as $\epsilon \downarrow 0$ produces a comparison theorem for the sub-Riemannian distance $r_0$. We, however, will continue to work with the $r_\epsilon$ distance since this is the one to which the It\^{o} formula of the next sub-section is applied. We must now therefore deduce from the above comparison an inequality of the type \eqref{eq:quadbound}. Firstly, for all non-negatively curved Sasakian foliations, in the sense that $\mathbf{K}_{\mathcal{H},J} \geq 0$ and ${\Ricbf}_{\mathcal{H},J^\perp} \geq 0$, it follows that
\begin{equation}\label{eq:nonneqsasak}
\tfrac{1}{2} \Delta_{\mathcal{H}} r_\epsilon^2 \leq \min \bigg\lbrace 1, \frac{1}{{\nv}_\epsilon} -1\bigg\rbrace + {\nv}_\epsilon + n + 2
\end{equation}
in which case one can choose $\lambda_\epsilon = 0$ and set $\nu_\epsilon$ equal to the supremum of the right-hand side. Note in fact that the right-hand side is bounded above by $n + \tfrac{7}{2}$, since $\nv_\epsilon \leq 1$. Alternatively, if
\begin{align}
\mathbf{K}_{\mathcal{H},J} \geq k,\quad {\Ricbf}_{\mathcal{H},J^\perp} \geq (n-2)k
\end{align}
for some $k<0$ then the comparison inequality implies
\begin{align}
\tfrac{1}{2} \Delta_{\mathcal{H}} r_\epsilon^2 \leq \, &\min \bigg\lbrace 1, \frac{1}{{\nv}_\epsilon} -1\bigg\rbrace + {\nv}_\epsilon\\
&+ \sqrt{\nv_\epsilon |k|} r_\epsilon \coth \sqrt{\nv_\epsilon |k|} r_\epsilon \left( n-2  + \frac{1 - \frac{\tanh \sqrt{\nv_\epsilon |k|} r_\epsilon}{\sqrt{\nv_\epsilon |k|} r_\epsilon}  + \epsilon |k|}{1 - \frac{2\tanh \sqrt{\nv_\epsilon |k|} r_\epsilon /2}{\sqrt{\nv_\epsilon |k|} r_\epsilon}  + \epsilon |k|}\right)\\
\leq \, &\min \bigg\lbrace 1, \frac{1}{{\nv}_\epsilon} -1\bigg\rbrace + {\nv}_\epsilon\\
&+ \sqrt{\nv_\epsilon |k|} r_\epsilon \coth \sqrt{\nv_\epsilon |k|} r_\epsilon \left( n-2  + \frac{1 - \frac{\tanh \sqrt{\nv_\epsilon |k|} r_\epsilon}{\sqrt{\nv_\epsilon |k|} r_\epsilon}}{1 - \frac{2\tanh \sqrt{\nv_\epsilon |k|} r_\epsilon /2}{\sqrt{\nv_\epsilon |k|} r_\epsilon}}\right)\\
\leq \, &\min \bigg\lbrace 1, \frac{1}{{\nv}_\epsilon} -1\bigg\rbrace + {\nv}_\epsilon + n+2 + (n+2)\sqrt{\nv_\epsilon |k|}r_\epsilon. \label{eq:neqsasak}
\end{align}
Therefore, constants $\nu_\epsilon$ and $\lambda_\epsilon$ can easily be chosen so that \eqref{eq:quadbound} is satisfied. Using, for example, the fact that $r_\epsilon \leq \frac{1}{2}\left(\alpha+\frac{r_\epsilon^2}{\alpha}\right)$ for any $\alpha >0$ we see that \eqref{eq:quadbound} is satisfied with
\begin{align}
\nu_\epsilon = n + \tfrac{7}{2} + \tfrac{\alpha}{2}(n+2)\sqrt{|k|}, \quad \lambda_\epsilon = \tfrac{1}{2 \alpha}(n+2)\sqrt{|k|}
\end{align}
for any $\alpha >0$. Note how in this case, constant uniform lower bounds on $\mathbf{K}_{\mathcal{H},J}$ and ${\Ricbf}_{\mathcal{H},J^\perp}$ imply that $\tfrac{1}{2} \Delta_{\mathcal{H}} r_\epsilon^2$ is bounded above by a linear function of $r_\epsilon$, whereas the condition \eqref{eq:quadbound} allows for a quadratic function. Indeed, if the curvatures $\mathbf{K}_{\mathcal{H},J}$ and ${\Ricbf}_{\mathcal{H},J^\perp}$ are bounded below not by a constant, but by a negative quadratic in $r_\epsilon$, then constants $\nu_\epsilon$ and $\lambda_\epsilon$ can still be found such that \eqref{eq:quadbound} is satisfied. Unbounded curvature is thus permitted in this particular setting.

\subsection{It\^{o} formula}

Now suppose $((X_t)_{t \geq 0},(\mathbb{P}_x)_{x\in M})$ is a sub-elliptic diffusion process generated by $\frac{1}{2}\Delta_{\mathcal{H}}$. Note that $X$ admits a smooth heat kernel, by the hypoellipticity of $\Delta_{\mathcal{H}}$. Denote by $\zeta$ the lifetime of $X$. Under the measure $\mathbb{P}_x$ the diffusion satisfies $X_0 = x$. It has recently been proved, in \cite{BGKT2018}, that for each $x \in M$ and $\epsilon >0$ there exists a continuous non-decreasing process $l$ that increases only when $X_t \in \Cut_\epsilon(x_0)$ and a real-valued martingale $\beta$ with
\begin{equation}\label{eq:quadvar}
d \langle \beta \rangle_t = \| \nabla_{\mathcal{H}} r_\epsilon \|^2(X_t) dt,
\end{equation}
such that
\begin{equation}\label{eq:itoform}
r_\epsilon (X_{t\wedge \zeta}) = r_\epsilon (x) + \beta_t + \frac{1}{2}\int_0^{t\wedge \zeta} \Delta_{\mathcal{H}}r_\epsilon (X_s) ds - l_{t\wedge \zeta}
\end{equation}
holds $\mathbb{P}_x$-almost surely. Note that under $\mathbb{P}_x$ the Lebesgue measure of the set of times when $X \in \Cut_\epsilon(x_0)$ is almost surely zero, so the integral in \eqref{eq:itoform} is well defined. Indeed, the distibutional part of $\Delta_{\mathcal{H}}r_\epsilon$ is captured by the geometric local time $l$. The idea now is to combine inequality \eqref{eq:quadbound} with the It\^{o} formula \eqref{eq:itoform} to derive estimates on the even moments of $r_\epsilon(X_t)$. Following the approach laid out in \cite{Thompson2016}, which was similar to that of \cite[Theorem~5.40]{Stroock2000}, these will imply bounds on the moment generating function of $r_\epsilon^2(X_t)$ and consequently a concentration inequality and exit time estimate.

\subsection{Second radial moment}

We begin by calculating an estimate on the second moment of $r_\epsilon(X_t)$, which will then be used as the base case in an induction argument yielding estimates for the higher even moments.

\begin{theorem}\label{th:secradmomthm}
Suppose there exists constants ${\nu_\epsilon} \geq 1$ and ${\lambda_\epsilon} \in \mathbb{R}$ such that inequality \eqref{eq:quadbound} holds. Then $X$ is non-explosive and
\begin{equation}\label{eq:secondmoment}
\mathbb{E}^x \left[ r_\epsilon^{2}(X_t)\right] \leq r_\epsilon^2(x)e^{{\lambda_\epsilon} t} +{\nu_\epsilon} t\Lambda_\epsilon(t)
\end{equation}
where
\begin{equation}
\Lambda_\epsilon(t):= \frac{e^{{\lambda_\epsilon} t}-1}{\lambda_\epsilon t}
\end{equation}
for all $t\geq 0$.
\end{theorem}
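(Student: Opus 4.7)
The plan is to apply It\^o's formula \eqref{eq:itoform}, composed with the $C^2$ map $r\mapsto r^2$, and then combine the resulting semimartingale decomposition with the hypothesis \eqref{eq:quadbound} via an integrating-factor argument, in the same spirit as the Riemannian case treated in \cite{Thompson2016}.

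First, using the pointwise identity $\tfrac{1}{2}\Delta_{\mathcal H}r_\epsilon^2 = r_\epsilon\Delta_{\mathcal H}r_\epsilon + \|\nabla_{\mathcal H}r_\epsilon\|^2$ on $M\setminus\Cut_\epsilon(x_0)$ together with the quadratic-variation relation \eqref{eq:quadvar}, I would derive
\begin{equation*}
r_\epsilon^2(X_{t\wedge\zeta}) = r_\epsilon^2(x) + 2\int_0^t r_\epsilon(X_{s\wedge\zeta})\,d\beta_s + \tfrac{1}{2}\int_0^{t\wedge\zeta}\Delta_{\mathcal H}r_\epsilon^2(X_s)\,ds - 2\int_0^{t\wedge\zeta}r_\epsilon(X_s)\,dl_s;
\end{equation*}
the integral against $\Delta_{\mathcal H}r_\epsilon^2$ is well defined thanks to the fact (noted immediately after \eqref{eq:itoform}) that the Lebesgue time spent by $X$ inside $\Cut_\epsilon(x_0)$ vanishes almost surely, and the final term has a non-positive sign because $r_\epsilon\geq 0$ and $l$ is non-decreasing.

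Next, I would introduce the exponential weight $e^{-\lambda_\epsilon s}$. A short It\^o computation shows that the contribution $-\lambda_\epsilon e^{-\lambda_\epsilon s}r_\epsilon^2\,ds$ from differentiating the prefactor cancels exactly with the $\lambda_\epsilon e^{-\lambda_\epsilon s}r_\epsilon^2\,ds$ term appearing in the bound \eqref{eq:quadbound}, so the drift collapses to $\nu_\epsilon e^{-\lambda_\epsilon s}\,ds$. Localising by $\tau_n := \inf\{t\geq 0 : d(X_t,x_0)\geq n\}$, which exhausts $M$ because $(M,g)$ is complete, and taking expectations at time $t\wedge\tau_n\wedge\zeta$ removes both the martingale and the non-positive local-time integral, leaving
\begin{equation*}
\mathbb{E}^x\!\left[e^{-\lambda_\epsilon(t\wedge\tau_n\wedge\zeta)}r_\epsilon^2(X_{t\wedge\tau_n\wedge\zeta})\right]\leq r_\epsilon^2(x)+\nu_\epsilon\int_0^t e^{-\lambda_\epsilon s}\,ds.
\end{equation*}

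Non-explosion then follows from a Chebyshev-type argument, using that $r_\epsilon$ is a proper function on $M$: the uniform-in-$n$ bound above forces $\mathbb{P}^x(\zeta\leq t)=0$ for every $t>0$. Once $\zeta=\infty$ almost surely, I would let $n\to\infty$ via Fatou's lemma and multiply through by $e^{\lambda_\epsilon t}$; recognising that $\int_0^t e^{\lambda_\epsilon(t-s)}\,ds = t\Lambda_\epsilon(t)$ then yields \eqref{eq:secondmoment}. I do not anticipate any serious obstacle beyond checking that the local-time term enters with the right sign (which is automatic because $r_\epsilon\geq 0$) and that the $C^2$ composition with $r\mapsto r^2$ is compatible with the local-time correction already present in \eqref{eq:itoform}.
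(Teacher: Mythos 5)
Your proposal is correct and follows essentially the same route as the paper: compose the radial It\^{o} formula with $r\mapsto r^2$ via \eqref{eq:deltaprodform}, localize to bounded domains so the martingale and (non-positive) local-time terms drop out after taking expectations, and close the differential inequality before passing to the limit via Fatou/monotone convergence to deduce both non-explosion and \eqref{eq:secondmoment}. The only cosmetic difference is that you discharge the linear differential inequality with an explicit integrating factor $e^{-\lambda_\epsilon s}$, which handles the case $\lambda_\epsilon<0$ automatically, whereas the paper first splits off the nondecreasing term $\mathbf{1}_{\{t\geq\tau_{D_i}\}}r_\epsilon^2(X_{\tau_{D_i}})$ so that $f_{x,i,2}$ is differentiable and then invokes Gronwall; the two manipulations are equivalent.
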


\begin{proof}
Let $\lbrace D_i \rbrace_{i=1}^{\infty}$ be an exhaustion of $M$ by regular domains and denote by $\tau_{D_i}$ the first exit time of $X$ from $D_i$. Note that $\tau_{D_i} < \tau_{D_{i+1}}$ and that this sequence of stopping times announces the explosion time $\zeta$. Then, by \eqref{eq:quadvar} and the It\^{o} formula \eqref{eq:itoform}, together with the fact that
\begin{equation}\label{eq:deltaprodform}
\tfrac{1}{2} \Delta_{\mathcal{H}} r_\epsilon^2 = r_\epsilon \Delta_{\mathcal{H}} r_\epsilon + \| \nabla_{\mathcal{H}} r_\epsilon \|^2,
\end{equation}
it follows that
\begin{equation}\label{eq:itoforsecond}
r_\epsilon^2 (X_{t\wedge \tau_{D_i}}) =r_\epsilon^2 (x) + 2 \int_{0}^{t \wedge \tau_{D_i}} r_\epsilon (X_s) \left( d\beta_s -dl_s\right) + \frac{1}{2}\int_{0}^{t \wedge \tau_{D_i}} \Delta_{\mathcal{H}} r_\epsilon^2 (X_s) ds
\end{equation}
holds, $\mathbb{P}_x$-almost surely. Since the domains $D_i$ are of compact closure the It\^{o} integral in \eqref{eq:itoforsecond} is a martingale and so
\begin{equation}
\begin{split}
\mathbb{E}^x\left[r_\epsilon^2(X_{t\wedge \tau_{D_i}})\right] = r_\epsilon^2(x) &- 2\,\mathbb{E}^x\left[ \int_0^{t\wedge \tau_{D_i}} r_\epsilon(X_s)dl_s\right] +  \frac{1}{2}\int_{0}^{t} \mathbb{E}^x\left[\mathbf{1}_{\lbrace s < \tau_{D_i}\rbrace}\Delta_{\mathcal{H}} r_\epsilon^2 (X_s) \right] ds
\end{split}
\end{equation}
for all $t\geq 0$. Before applying Gronwall's inequality we should be careful, since we are allowing the coefficient ${\lambda_\epsilon}$ to be negative. For this, note that
\begin{equation}\
\mathbb{E}^x\left[r_\epsilon^2(X_{t\wedge \tau_{D_i}})\right] = \mathbb{E}^x\left[\mathbf{1}_{\lbrace t < \tau_{D_i}\rbrace} r_\epsilon^2(X_t)\right]+\mathbb{E}^x\left[\mathbf{1}_{\lbrace t \geq \tau_{D_i}\rbrace} r_\epsilon^2(X_{\tau_{D_i}})\right]
\end{equation}
and that the two functions
\begin{equation}
t\mapsto \mathbb{E}^x\left[ \int_0^{t\wedge \tau_{D_i}} r_\epsilon(X_s)dl_s\right], \quad t\mapsto \mathbb{E}^x\left[\mathbf{1}_{\lbrace t \geq \tau_{D_i}\rbrace}r_\epsilon^2(X_{\tau_{D_i}})\right]
\end{equation}
are nondecreasing, so if we define a function $f_{x,i,2}$ by
\begin{equation}
f_{x,i,2}(t):= \mathbb{E}^x\left[ \mathbf{1}_{\lbrace t<\tau_{D_i}\rbrace} r_\epsilon^2(X_t)\right]
\end{equation}
then $f_{x,i,2}$ is differentiable and we have the differential inequality
\begin{equation}\label{eq:diffineq2}\
\begin{cases}
f'_{x,i,2}(t)\leq {\nu_\epsilon} + {\lambda_\epsilon} f_{x,i,2}(t)\\
f_{x,i,2}(0)=r_\epsilon^2(x)
\end{cases}
\end{equation}
for all $t\geq 0$. Applying Gronwall's inequality to \eqref{eq:diffineq2} yields
\begin{equation}\label{eq:basecase}
\mathbb{E}^x \left[ \mathbf{1}_{\lbrace t<\tau_{D_i}\rbrace}r_\epsilon^2(X_t) \right] \leq r_\epsilon^2 (x)e^{{\lambda_\epsilon} t} +{\nu_\epsilon}\left(\frac{e^{{\lambda_\epsilon} t}-1}{{\lambda_\epsilon}}\right)
\end{equation}
for all $t\geq0$. Choosing $D_i = B^\epsilon_i(x_0) := \lbrace y \in M : r_\epsilon(y) < i\rbrace$, inequality \eqref{eq:basecase} implies
\begin{equation}
\mathbb{P}^x \lbrace \tau_{B^\epsilon_i(x_0)} \leq t \rbrace  \leq \frac{r_\epsilon^2(x)e^{\lambda_\epsilon t}+{\nu_\epsilon} t \Lambda_\epsilon (t)}{i^2} \nonumber
\end{equation}
for all $t\geq0$, which implies that $X$ is non-explosive. Inequality \eqref{eq:secondmoment} therefore follows from \eqref{eq:basecase} by the monotone convergence theorem.
\end{proof}

We will refer the object on the left-hand side of inequality \eqref{eq:secondmoment} as \textit{the second radial moment of $X_t$ with respect to $x_0$}. To find an inequality for \textit{the first radial moment of $X_t$ with respect to $x_0$} one can simply use Jensen's inequality. Note that $\lim_{{\lambda_\epsilon} \rightarrow 0} \Lambda_\epsilon(t)= t$, which provides the sense in which Theorem \ref{th:secradmomthm} and similar statements should be interpreted when ${\lambda_\epsilon} =0$.

\subsection{Higher even radial moments}\label{ss:hrm}

Recall that if $Y$ is a real-valued Gaussian random variable with mean $\mu$ and variance $\sigma^2$ then for $\mom \in \mathbb{N}$ one has the formula
\begin{equation}\label{eq:absolutelag}
\mathbb{E}\left[  Y^{2\mom} \right] = \left(2\sigma^2\right)^\mom \mom! L^{-\frac{1}{2}}_\mom \left(-\frac{\mu^2}{2\sigma^2}\right)
\end{equation}
where $L^{\alpha}_{\mom}(z)$ are the \textit{Laguerre polynomials}, defined by the formula
\begin{equation}
L^{\alpha}_{\mom}(z) = e^z \frac{z^{-\alpha}}{\mom!}\frac{\partial^\mom}{\partial z^\mom}\left(e^{-z}z^{\mom+\alpha}\right)
\end{equation}
for $\mom=0,1,2,\ldots$ and $\alpha >-1$. For all the properties of Laguerre polynomials used in this article, see \cite{Lebedev1972}. In particular, if $X$ is a standard Brownian motion on $\mathbb{R}$ then
\begin{equation}\label{eq:brownianmoments}
\mathbb{E}^x \left[\vert X_t \vert^{2\mom} \right]= \left(2t\right)^\mom \mom! L^{-\frac{1}{2}}_\mom \left(-\frac{\vert x\vert^2}{2t}\right)
\end{equation}
for all $t\geq0$. With this in mind we prove the following theorem:

\begin{theorem}\label{th:evenradmomthm}
Suppose there exist constants ${\nu_\epsilon} \geq1$ and ${\lambda_\epsilon} \in \mathbb{R}$ such that inequality \eqref{eq:quadbound} holds and let $\mom \in \mathbb{N}$. Then
\begin{equation}\label{eq:evenmom}
\mathbb{E}^x\left[r_\epsilon^{2\mom}(X_t)\right] \leq  \left(2t \Lambda_\epsilon(t)\right)^\mom \mom! L^{\frac{{\nu_\epsilon}}{2}-1}_\mom \left(-\frac{r_\epsilon^2(x)e^{\lambda_\epsilon t}}{2t{\Lambda_\epsilon}(t)}\right)
\end{equation}
for all $t\geq 0$, where ${\Lambda_\epsilon}(t)$ is defined as in Theorem \ref{th:secradmomthm}.
\end{theorem}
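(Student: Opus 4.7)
The argument is by induction on $\mom$. The base case $\mom=1$ coincides with Theorem~\ref{th:secradmomthm}, since $L^{\nu_\epsilon/2-1}_1(-z)=\nu_\epsilon/2+z$ turns \eqref{eq:evenmom} into exactly \eqref{eq:secondmoment}.

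For the induction step I first derive a pointwise bound on $\tfrac12\Delta_{\mathcal{H}} r_\epsilon^{2\mom}$. Applying the chain rule for $\Delta_{\mathcal{H}}$ to $f(r)=r^{2\mom}$ and using \eqref{eq:deltaprodform} to eliminate $r_\epsilon^{2\mom-1}\Delta_{\mathcal{H}} r_\epsilon$ in favour of $\tfrac12\Delta_{\mathcal{H}} r_\epsilon^2$ gives
\[
\tfrac12\Delta_{\mathcal{H}} r_\epsilon^{2\mom} = \mom\, r_\epsilon^{2\mom-2}\cdot\tfrac12\Delta_{\mathcal{H}} r_\epsilon^2 + 2\mom(\mom-1)r_\epsilon^{2\mom-2}\|\nabla_{\mathcal{H}} r_\epsilon\|^2.
\]
Combined with the hypothesis \eqref{eq:quadbound} and the pointwise bound $\|\nabla_{\mathcal{H}} r_\epsilon\|^2\leq 1$ (already used in Example~\ref{ex:reeb}), this yields the key inequality
\[
\tfrac12\Delta_{\mathcal{H}} r_\epsilon^{2\mom} \leq \mom\bigl(\nu_\epsilon+2(\mom-1)\bigr) r_\epsilon^{2\mom-2} + \mom\lambda_\epsilon r_\epsilon^{2\mom}.
\]

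Next I rerun the localization step from Theorem~\ref{th:secradmomthm}. Composing the Itô formula \eqref{eq:itoform} with $s\mapsto s^{2\mom}$ on $D_i=B^\epsilon_i(x_0)$, taking expectations to kill both the martingale part and the nonpositive local-time contribution, and truncating via $\mathbf{1}_{\{t<\tau_{D_i}\}}$ exactly as before to handle the potentially negative sign of $\lambda_\epsilon$, I obtain the differential inequality
\[
f'_{\mom,i}(t) \leq \mom\bigl(\nu_\epsilon+2(\mom-1)\bigr) f_{\mom-1,i}(t) + \mom\lambda_\epsilon f_{\mom,i}(t),\qquad f_{\mom,i}(0)=r_\epsilon^{2\mom}(x),
\]
where $f_{\mom,i}(t):=\E^x[\mathbf{1}_{\{t<\tau_{D_i}\}} r_\epsilon^{2\mom}(X_t)]$. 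Solving by variation of constants, sending $i\to\infty$ (non-explosion is already in hand), and invoking monotone convergence gives
\[
\E^x[r_\epsilon^{2\mom}(X_t)] \leq r_\epsilon^{2\mom}(x) e^{\mom\lambda_\epsilon t} + \mom\bigl(\nu_\epsilon+2(\mom-1)\bigr)\int_0^t e^{\mom\lambda_\epsilon(t-s)}\E^x[r_\epsilon^{2\mom-2}(X_s)]\,ds.
\]

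The final step is to match the right-hand side, after inserting the induction hypothesis \eqref{eq:evenmom} at level $\mom-1$, with the candidate bound at level $\mom$. The cleanest route is to verify that the proposed upper bound $g_\mom(t):=(2t\Lambda_\epsilon(t))^\mom \mom!\, L^{\nu_\epsilon/2-1}_\mom\!\bigl(-r_\epsilon^2(x) e^{\lambda_\epsilon t}/(2t\Lambda_\epsilon(t))\bigr)$ satisfies, with equality, the linear ODE $g'_\mom(t)=\mom(\nu_\epsilon+2(\mom-1))g_{\mom-1}(t)+\mom\lambda_\epsilon g_\mom(t)$ with the correct initial value $g_\mom(0)=r_\epsilon^{2\mom}(x)$, and then appeal once more to Gronwall. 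This reduces everything to a single algebraic identity linking $L^\alpha_\mom$, $L^\alpha_{\mom-1}$ and the $z$-derivative $\partial_z L^\alpha_\mom=-L^{\alpha+1}_{\mom-1}$, which follows from the standard differential-recurrence relations in \cite{Lebedev1972}, and is most transparently seen from the generating function $(1-u)^{-\alpha-1}\exp(-zu/(1-u))$. This special-function bookkeeping, arranging that the induction closes with exactly the advertised Laguerre polynomial, is where the technical weight of the proof sits; the probabilistic input is a direct extension of Theorem~\ref{th:secradmomthm}.
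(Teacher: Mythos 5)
Your proposal is correct and follows essentially the same route as the paper: the same pointwise bound $\tfrac12\Delta_{\mathcal{H}}r_\epsilon^{2p}\leq p(\nu_\epsilon+2(p-1))r_\epsilon^{2p-2}+p\lambda_\epsilon r_\epsilon^{2p}$ (the paper leaves the use of $\|\nabla_{\mathcal{H}}r_\epsilon\|\leq 1$ implicit, which you make explicit), the same localization and differential inequality \eqref{eq:togron}, and an induction on $p$ closed by Laguerre-polynomial algebra. The only difference is cosmetic: the paper carries the explicit binomial sum \eqref{eq:inductionineq} through the induction and identifies it with the Laguerre polynomial at the end via \eqref{eq:lagpropone}, whereas you verify directly that the Laguerre expression solves the recursion ODE with equality and compare via Gronwall; both reduce to the same special-function identities.
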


\begin{proof}
By \eqref{eq:quadbound} it follows that, on $M \setminus \Cut_\epsilon(x_0)$ and for $\mom \in \mathbb{N}$, we have
\begin{equation}
\tfrac{1}{2}\Delta_{\mathcal{H}} r_\epsilon^{2\mom} \leq \mom\left({\nu_\epsilon} + 2\left(\mom-1\right)\right)r_\epsilon^{2\mom-2} + \mom {\lambda_\epsilon} r_\epsilon^{2\mom},
\end{equation}
and by the It\^{o} formula \eqref{eq:itoform}, using \eqref{eq:quadvar} and \eqref{eq:deltaprodform}, we have
\begin{equation}
r_\epsilon^{2\mom} (X_{t\wedge \tau_{D_i}}) = r_\epsilon^{2\mom} (x) + 2\mom \int_{0}^{t \wedge \tau_{D_i}} r_\epsilon^{2\mom-1} (X_s) \left(d\beta_s- dl_s\right) + \frac{1}{2}\int_{0}^{t \wedge \tau_{D_i}} \Delta_{\mathcal{H}} r_\epsilon^{2\mom} (X_s) ds
\end{equation}
for all $t\geq 0$, almost surely, where the stopping times $\tau_{D_i}$ are defined as in the proof of Theorem \ref{th:secradmomthm}. It follows that if we define functions $f_{x,i,2\mom}$ by
\begin{equation}
f_{x,i,2\mom}(t):= \mathbb{E}^x\left[\mathbf{1}_{\lbrace t<\tau_{D_i}\rbrace} r_\epsilon^{2\mom}(X_t)\right]
\end{equation}
then, arguing as we did in the proof of Theorem \ref{th:secradmomthm}, we have the differential inequalities
\begin{equation}\label{eq:togron}
\begin{cases}
f'_{x,i,2\mom}(t)\leq \mom\left({\nu_\epsilon} + 2\left(\mom-1\right)\right) f_{x,i,2(\mom-1)}(t) + \mom{\lambda_\epsilon} f_{x,i,2\mom}(t)\\
f_{x,i,2\mom}(0)=r_\epsilon^{2\mom}(x)
\end{cases}
\end{equation}
for all $t\geq 0$. Applying Gronwall's inequality yields
\begin{equation}\label{eq:evenmoments}
f_{x,i,2\mom}(t) \leq \left(r_\epsilon^{2\mom} (x) +  \mom\left({\nu_\epsilon} + 2\left(\mom-1\right)\right) \int_0^t f_{x,i,2(\mom-1)}(s) e^{-\mom{\lambda_\epsilon} s} ds\right)e^{\mom{\lambda_\epsilon} t}
\end{equation}
for all $t \geq 0$ and $\mom \in \mathbb{N}$. The next step in the proof is to use induction on $\mom$ to show that
\begin{equation}\label{eq:inductionineq}
f_{x,i,2\mom}(t) \leq \sum_{\sv=0}^{\mom} \binom{\mom}{\sv} \left(2{\lambda_\epsilon}(t)\right)^{\mom-\sv} r^{2\sv}_\epsilon (x) \frac{\Gamma (\frac{{\nu_\epsilon}}{2}+\mom)}{\Gamma(\frac{{\nu_\epsilon}}{2}+\sv)}e^{\mom{\lambda_\epsilon} t} 
\end{equation}
for all $t \geq 0$ and $\mom \in \mathbb{N}$, where ${\lambda_\epsilon}(t):=(1-e^{-{\lambda_\epsilon} t})/{\lambda_\epsilon}$. Inequality \eqref{eq:basecase} covers the base case $\mom=1$. If we hypothesise that the inequality holds for some $\mom-1$ then by inequality \eqref{eq:evenmoments} we have
\begin{equation}\label{eq:prelimind}
f_{x,i,2\mom}(t) \leq \bigg(r_\epsilon^{2\mom} (x) + \mom\left({\nu_\epsilon} + 2\left(\mom-1\right)\right)\sum_{\sv=0}^{\mom-1} \binom{\mom-1}{\sv} r^{2\sv}_\epsilon (x) \frac{\Gamma (\frac{{\nu_\epsilon}}{2}+\mom-1)}{\Gamma(\frac{{\nu_\epsilon}}{2}+\sv)}{\tilde{\lambda}_\epsilon}(t) \bigg)e^{\mom{\lambda_\epsilon} t}
\end{equation}
for all $t\geq 0$, where ${\tilde{\lambda}_\epsilon}(t)= \int_0^t\left(2{\lambda_\epsilon}(s)\right)^{\mom-1-\sv}e^{-{\lambda_\epsilon} s} ds$. Using $2(\mom-\sv){\tilde{\lambda}_\epsilon}(t) = (2{\lambda_\epsilon}(t))^{\mom-\sv}$ and properties of the Gamma function it is straightforward to deduce inequality \eqref{eq:inductionineq} from inequality \eqref{eq:prelimind}, which completes the inductive argument. Since ${\nu_\epsilon} \geq 1$ we can then apply the relation 
\begin{equation}\label{eq:lagpropone}
L^{\alpha}_\mom (z) = \sum_{\sv=0}^{\mom} \frac{\Gamma(\mom+\alpha+1)}{\Gamma(\sv+\alpha+1)}\frac{(-z)^\sv}{\sv!(\mom-\sv)!},
\end{equation}
which can be proved using Leibniz's formula, to see that
\begin{equation}\label{eq:hyper0}
\sum_{\sv=0}^{\mom} \binom{\mom}{\sv} (2{\lambda_\epsilon}(t))^{\mom-\sv} r_\epsilon^{2\sv}(x) \frac{\Gamma (\frac{{\nu_\epsilon}}{2}+\mom)}{\Gamma(\frac{{\nu_\epsilon}}{2}+\sv)} = (2t {\Lambda_\epsilon}(t) e^{\lambda_\epsilon t})^\mom \mom! L^{\frac{{\nu_\epsilon}}{2}-1}_\mom \left(-\frac{r_\epsilon^2(x)e^{\lambda_\epsilon t}}{2t {\Lambda_\epsilon}(t) }\right)
\end{equation}
and so by inequality \eqref{eq:inductionineq} it follows that
\begin{equation}\label{eq:afhyp}
f_{x,i,2\mom}(t) \leq  \left(2t{\Lambda_\epsilon}(t)\right)^\mom \mom! L^{\frac{{\nu_\epsilon}}{2}-1}_\mom \left(-\frac{r_\epsilon^2(x)e^{\lambda_\epsilon t}}{2t {\Lambda_\epsilon}(t) }\right)
\end{equation}
for $t\geq0$ and $i,\mom \in \mathbb{N}$. The result follows from this by the monotone convergence theorem.
\end{proof}

We will refer the object on the left-hand side of inequality \eqref{eq:evenmom} as \textit{the $2\mom$-th radial moment of $X_t$ with respect to $x_0$}. One can deduce an estimate for \textit{the $(2\mom-1)$-th radial moment of $X_t$ with respect to $x_0$} by Jensen's inequality.

\subsection{Exponential estimate}\label{ss:exprm}

For $\vert \gamma \vert<1$ the Laguerre polynomials also satisfy the identity
\begin{equation}\label{eq:lagproptwo}
\sum_{\mom=0}^{\infty} \gamma^\mom L^{\alpha}_\mom(z) = (1-\gamma)^{-(\alpha+1)} e^{-\frac{z\gamma}{1-\gamma}}.
\end{equation}
It follows from this identity and equation \eqref{eq:absolutelag} that for a real-valued Gaussian random variable $Y$ with mean $\mu$ and variance $\sigma^2$ we have for $\theta \geq 0$ that
\begin{equation}
\mathbb{E} \left[e^{\frac{\theta}{2} \vert Y\vert^2}\right] = \left(1-\theta \sigma^2\right)^{-\frac{1}{2}}\exp\left(\frac{\theta\vert \mu \vert ^2 }{2(1-\theta \sigma^2)}\right)
\end{equation}
so long as $\theta \sigma^2 <1$ (and there is a generalization of this for Gaussian measures on Hilbert spaces). In particular, if $X$ is a standard Brownian motion on $\mathbb{R}$ starting from $x \in \mathbb{R}$ then for $t\geq0$ it follows that
\begin{equation}\label{eq:brownianexp}
\mathbb{E} \left[e^{\frac{\theta}{2} \vert X_t(x)\vert^2}\right] = \left(1-\theta t\right)^{-\frac{1}{2}}\exp\left(\frac{\theta \vert x \vert^2 }{2(1-\theta t)}\right)
\end{equation}
so long as $\theta t <1$. With this in mind we prove the following theorem:

\begin{theorem}\label{th:expenbm}
Suppose there exists constants ${\nu_\epsilon} \geq1$ and ${\lambda_\epsilon} \in \mathbb{R}$ such that inequality \eqref{eq:quadbound} holds. Then
\begin{equation}\label{eq:exponential2}
\mathbb{E}^x \left[e^{\frac{\theta}{2} r_\epsilon^2 (X_t)}\right] \leq \left(1-\theta t {\Lambda_\epsilon}(t) \right)^{-\frac{{\nu_\epsilon}}{2}}\exp\left(\frac{\theta r_\epsilon^2(x)e^{{\lambda_\epsilon} t}}{2(1-\theta t {\Lambda_\epsilon}(t) )}\right)
\end{equation}
for all $t,\theta \geq 0$ such that $\theta t {\Lambda_\epsilon}(t) <1$, where ${\Lambda_\epsilon}(t)$ is defined as in Theorem \ref{th:secradmomthm}.
\end{theorem}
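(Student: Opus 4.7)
The plan is to expand the exponential in its Taylor series, apply the even-moment estimate from Theorem \ref{th:evenradmomthm} term-by-term, and then recognise the resulting power series as the generating function \eqref{eq:lagproptwo} for the Laguerre polynomials.

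Concretely, since $r_\epsilon \geq 0$, I would start by writing, via monotone convergence,
\begin{equation}
\mathbb{E}^x\!\left[e^{\frac{\theta}{2} r_\epsilon^2(X_t)}\right] = \sum_{\mom=0}^{\infty} \frac{(\theta/2)^\mom}{\mom!}\, \mathbb{E}^x\!\left[r_\epsilon^{2\mom}(X_t)\right].
\end{equation}
Each summand is non-negative, so no integrability assumption is needed to justify the interchange. Applying Theorem \ref{th:evenradmomthm} gives
\begin{equation}
\mathbb{E}^x\!\left[e^{\frac{\theta}{2} r_\epsilon^2(X_t)}\right] \leq \sum_{\mom=0}^{\infty} \bigl(\theta t \Lambda_\epsilon(t)\bigr)^\mom L^{\frac{\nu_\epsilon}{2}-1}_\mom\!\left(-\frac{r_\epsilon^2(x) e^{\lambda_\epsilon t}}{2t\Lambda_\epsilon(t)}\right),
\end{equation}
where the $\mom!$ and the $(2t\Lambda_\epsilon(t))^\mom$ factors from the moment bound cancel the $1/\mom!$ and absorb the $(\theta/2)^\mom$.

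The second step is to invoke the identity \eqref{eq:lagproptwo} with $\gamma = \theta t \Lambda_\epsilon(t)$, $\alpha = \frac{\nu_\epsilon}{2}-1$, and $z = -r_\epsilon^2(x) e^{\lambda_\epsilon t}/(2t\Lambda_\epsilon(t))$. The hypothesis $\theta t \Lambda_\epsilon(t)<1$ is exactly the condition $|\gamma|<1$ needed for the identity to apply, and substituting gives
\begin{equation}
\sum_{\mom=0}^{\infty} \gamma^\mom L^{\alpha}_\mom(z) = (1-\gamma)^{-\frac{\nu_\epsilon}{2}} \exp\!\left(\frac{\theta r_\epsilon^2(x) e^{\lambda_\epsilon t}}{2\bigl(1-\theta t \Lambda_\epsilon(t)\bigr)}\right),
\end{equation}
which is the claimed estimate \eqref{eq:exponential2}.

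There is no real obstacle to overcome: the only delicate point is the interchange of sum and expectation, which is immediate from monotone convergence because each $\mathbb{E}^x[r_\epsilon^{2\mom}(X_t)]$ is non-negative and the Laguerre polynomial $L_\mom^{\nu_\epsilon/2-1}$ evaluated at a non-positive argument is non-negative (visible from the expansion \eqref{eq:lagpropone}, whose coefficients are all non-negative since $\nu_\epsilon \geq 1$). The $\lambda_\epsilon = 0$ case is handled by interpreting $\Lambda_\epsilon(t)$ as its limit $t$, consistently with the convention noted after Theorem \ref{th:secradmomthm}.
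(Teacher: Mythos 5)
Your proposal is correct and follows essentially the same route as the paper: Taylor-expand the exponential, apply the even radial moment bounds term by term, and sum via the Laguerre generating function \eqref{eq:lagproptwo}. The only (immaterial) difference is that the paper carries out the summation for the stopped quantities $f_{x,i,2\mom}(t)$ from \eqref{eq:afhyp} and then lets $i\to\infty$ by monotone convergence, whereas you apply the already-limited bound of Theorem \ref{th:evenradmomthm} directly and justify the interchange of sum and expectation by Tonelli, which is a clean and valid justification.
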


\begin{proof}
Using inequality \eqref{eq:afhyp} and equation \eqref{eq:lagproptwo} we see that
\begin{equation}\label{eq:exponential1}
\begin{split}
\text{ }\mathbb{E}^x \left[\mathbf{1}_{\{t< \tau_{D_i} \}} e^{\frac{\theta}{2} r_\epsilon^2 (X_t)}\right] =&\text{ }\sum_{\mom=0}^{\infty} \frac{\theta^\mom}{2^\mom \mom!} f_{x,i,2\mom}(t) \\
\leq&\text{ }\sum_{\mom=0}^{\infty} \left(\theta t{\Lambda_\epsilon}(t)\right)^\mom L^{\frac{{\nu_\epsilon}}{2}-1}_\mom \left(-\frac{r_\epsilon^2(x)e^{\lambda_\epsilon t}}{2t{\Lambda_\epsilon}(t)}\right) \\
=&\text{ } \left(1-\theta t {\Lambda_\epsilon}(t) \right)^{-\frac{{\nu_\epsilon}}{2}}\exp\left(\frac{\theta r_\epsilon^2(x)e^{{\lambda_\epsilon} t}}{2(1-\theta t {\Lambda_\epsilon}(t))}\right) \nonumber
\end{split}
\end{equation}
where we justify switching the order of integration with the stopping time. The result follows by the monotone convergence theorem.
\end{proof}

The following corollary concerns the sub-Riemannian limit as $\epsilon \rightarrow 0$. It will be assumed in this corollary (and in two subsequent theorems) that the constants $\nu_\epsilon$ and $\lambda_\epsilon$ converge as $\epsilon \rightarrow 0$. If the constants have been chosen is such a way that they do not converge, but are nonetheless uniformly bounded in $\epsilon$, then they can simply be replaced with these uniform bounds, so that the convergence then trivially holds. Note that in Example \ref{ex:reeb} we found suitable constants that were indeed chosen independently of $\epsilon$.

\begin{corollary}\label{cor:expenbmr0}
For each $\epsilon >0$ suppose there exists constants ${\nu_\epsilon} \geq1$ and ${\lambda_\epsilon} \in \mathbb{R}$ such that inequality \eqref{eq:quadbound} holds and such that $(\nu_\epsilon,\lambda_\epsilon) \rightarrow (\nu, \lambda )$ as $\epsilon \rightarrow 0$. Then
\begin{equation}\label{eq:exponential2r0}
\mathbb{E}^x \left[e^{\frac{\theta}{2} r_0^2 (X_t)}\right] \leq \left(1-\theta t {\Lambda}(t)\right)^{-\frac{{\nu}}{2}}\exp\left(\frac{\theta r_0^2(x)e^{{\lambda} t}}{2(1-\theta t {\Lambda}(t))}\right)
\end{equation}
for all $t,\theta \geq 0$ such that $\theta t {\Lambda}(t)<1$, where ${\Lambda}(t):=(e^{{\lambda} t} -1)/{\lambda t}$.
\end{corollary}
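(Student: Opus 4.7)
The plan is to derive this corollary from Theorem \ref{th:expenbm} by a straightforward $\epsilon \downarrow 0$ limit argument, leveraging two elementary ingredients that are already at hand. First, as noted at the start of Section \ref{sec:subriem}, the distances $r_\epsilon$ form a monotone family increasing pointwise to $r_0$ as $\epsilon \downarrow 0$, so that $e^{\frac{\theta}{2} r_\epsilon^2(X_t)}$ increases pointwise to $e^{\frac{\theta}{2} r_0^2(X_t)}$ for every sample point. Second, the right-hand side of \eqref{eq:exponential2} depends continuously on the triple $(\nu_\epsilon,\lambda_\epsilon,r_\epsilon(x))$, with $\Lambda_\epsilon(t)$ itself a continuous function of $\lambda_\epsilon$.

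Concretely, I would fix $t,\theta \geq 0$ satisfying $\theta t\Lambda(t) < 1$ and use $\lambda_\epsilon \to \lambda$, together with the continuity of $\lambda \mapsto (e^{\lambda t}-1)/(\lambda t)$, to conclude that $\Lambda_\epsilon(t) \to \Lambda(t)$ and hence $\theta t \Lambda_\epsilon(t) < 1$ for all sufficiently small $\epsilon > 0$. For each such $\epsilon$, Theorem \ref{th:expenbm} delivers the bound \eqref{eq:exponential2} at level $\epsilon$. The monotone convergence theorem then identifies $\mathbb{E}^x[e^{\frac{\theta}{2} r_0^2(X_t)}]$ as the limit of the sequence $\mathbb{E}^x[e^{\frac{\theta}{2} r_\epsilon^2(X_t)}]$, while the right-hand side of \eqref{eq:exponential2} passes to the claimed right-hand side of \eqref{eq:exponential2r0} under $\nu_\epsilon \to \nu$, $\lambda_\epsilon \to \lambda$ and $r_\epsilon(x) \to r_0(x)$. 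Combining the two limits gives \eqref{eq:exponential2r0}, and a posteriori this also shows that $e^{\frac{\theta}{2} r_0^2(X_t)}$ is $\mathbb{P}^x$-integrable.

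Since the entire analytic content is already captured by Theorem \ref{th:expenbm}, no serious obstacle arises here. The only point worth flagging is the need to rely on monotone rather than dominated convergence, as the limiting integrand is not a priori known to be integrable, together with the preservation of the strict inequality $\theta t \Lambda_\epsilon(t) < 1$ for all small $\epsilon$; both are handled by the continuity observations above.
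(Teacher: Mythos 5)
Your argument is correct and reaches the conclusion by the same overall route as the paper, namely by taking the limit $\epsilon\downarrow 0$ of both sides of \eqref{eq:exponential2}; the only difference lies in how the limit is interchanged with the expectation. The paper asserts that the family $\lbrace e^{\frac{\theta}{2}r_{1/n}^2(X_t)}\rbrace_{n\in\mathbb{N}}$ is uniformly integrable (which one would justify, say, via de la Vall\'ee-Poussin using the bound of Theorem \ref{th:expenbm} at a slightly larger parameter $\theta'>\theta$, still admissible since $\theta t\Lambda(t)<1$ is strict) and then combines uniform integrability with almost sure convergence. You instead exploit the monotonicity $r_\epsilon\uparrow r_0$ stated at the beginning of Section \ref{sec:subriem} and apply the monotone convergence theorem directly. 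Your route is marginally more economical: it needs no auxiliary exponent and automatically yields the a posteriori integrability of $e^{\frac{\theta}{2}r_0^2(X_t)}$, while the uniform integrability route would also work for families that are not monotone. Your attention to preserving the strict inequality $\theta t\Lambda_\epsilon(t)<1$ for small $\epsilon$ via continuity of $\lambda\mapsto(e^{\lambda t}-1)/(\lambda t)$ is exactly the point one must check, and you handle it correctly.
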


\begin{proof}
It follows from Theorem \ref{th:expenbm} that for each $t,\theta$ satisfying the conditions of the theorem, the sequence of random variables $$\bigg\lbrace e^{\frac{\theta}{2}r_{1/n}^2(X_t)} \bigg\rbrace_{n \in \mathbb{N}}$$ is uniformly integrable and, therefore, inequality \eqref{eq:exponential2r0} follows from \eqref{eq:exponential2} by setting $\epsilon = 1/n$ and taking the limit $n \rightarrow \infty$ of both sides of \eqref{eq:exponential2}.
\end{proof}

\subsection{Concentration inequality}\label{ss:cncntrtnineq}

If $X$ is a Brownian motion on $\mathbb{R}^{\dimm}$ starting at $x$ then it is easy to see that
\begin{equation}\label{eq:asymrmn3}
\lim_{r\rightarrow \infty}\frac{1}{r^2}\log \mathbb{P}^x\lbrace X_t \not\in B_r(x_0)\rbrace = -\frac{1}{2t}
\end{equation}
for all $t>0$. Note that the right-hand side does not depend on the dimension $\dimm$. With this in mind we prove the following theorem, for which we recall that $B^0_r(x_0)$ denotes the open ball centred at $x_0$ and with radius $r$ in the sub-Riemannian distance $d_0$:

\begin{theorem}\label{th:concenthm}
For each $\epsilon >0$ suppose there exists constants ${\nu_\epsilon} \geq1$ and ${\lambda_\epsilon} \in \mathbb{R}$ such that inequality \eqref{eq:quadbound} holds and such that $(\nu_\epsilon,\lambda_\epsilon) \rightarrow (\nu, \lambda )$ as $\epsilon \rightarrow 0$. Then
\begin{equation}\label{eq:concenasymineq}
\lim_{r\rightarrow \infty} \frac{1}{r^2}\log \mathbb{P}^x \lbrace X_t \notin B^0_r(x_0) \rbrace \leq -\frac{1}{2t \Lambda(t)}
\end{equation}
for all $t>0$, where ${\Lambda}(t)$ is defined as in Corollary \ref{cor:expenbmr0}.
\end{theorem}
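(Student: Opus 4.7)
The plan is a standard Chernoff-type argument built on top of Corollary \ref{cor:expenbmr0}. First, I observe that $\{X_t \notin B^0_r(x_0)\}$ is exactly the event $\{r_0(X_t) \geq r\}$, so by the exponential Markov inequality, for any $\theta \geq 0$ we have
\begin{equation}
\mathbb{P}^x\{X_t \notin B^0_r(x_0)\} = \mathbb{P}^x\bigl\{e^{\frac{\theta}{2}r_0^2(X_t)} \geq e^{\frac{\theta}{2}r^2}\bigr\} \leq e^{-\frac{\theta}{2}r^2}\, \mathbb{E}^x\bigl[e^{\frac{\theta}{2}r_0^2(X_t)}\bigr].
\end{equation}

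Next, for $t>0$ fixed and $\theta$ restricted to the range $0<\theta<1/(t\Lambda(t))$, I apply Corollary \ref{cor:expenbmr0} to bound the exponential moment. Taking logarithms, dividing by $r^2$, and using that $r_0(x)$, $\nu$, $\lambda$ and $\theta$ do not depend on $r$, one obtains
\begin{equation}
\frac{1}{r^2}\log \mathbb{P}^x\{X_t \notin B^0_r(x_0)\} \leq -\frac{\theta}{2} + \frac{1}{r^2}\left[-\frac{\nu}{2}\log(1-\theta t\Lambda(t)) + \frac{\theta r_0^2(x)e^{\lambda t}}{2(1-\theta t\Lambda(t))}\right].
\end{equation}
Letting $r\to\infty$ with $\theta$ held fixed, the bracketed term contributes $0$, and I conclude
\begin{equation}
\limsup_{r\to\infty}\frac{1}{r^2}\log \mathbb{P}^x\{X_t \notin B^0_r(x_0)\} \leq -\frac{\theta}{2}.
\end{equation}

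Finally, since this holds for every $\theta \in (0, 1/(t\Lambda(t)))$, I optimize by letting $\theta \nearrow 1/(t\Lambda(t))$, which yields the claimed bound
\begin{equation}
\limsup_{r\to\infty}\frac{1}{r^2}\log \mathbb{P}^x\{X_t \notin B^0_r(x_0)\} \leq -\frac{1}{2t\Lambda(t)}.
\end{equation}
There is no real obstacle here beyond being careful about the order of limits: one must take $r\to\infty$ first (so the $r$-independent $\nu$- and $r_0(x)$-terms are killed by the $1/r^2$ factor) and only afterwards push $\theta$ up to the critical value $1/(t\Lambda(t))$, where the exponential moment bound itself would diverge. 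Also, since $\lim_{\lambda\to 0}\Lambda(t)=1$, the stated conclusion remains valid in the case $\lambda=0$ with $\Lambda(t)$ interpreted in that limiting sense.
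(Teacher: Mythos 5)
Your proof is correct and follows essentially the same route as the paper: Markov's inequality applied to $e^{\frac{\theta}{2}r_0^2(X_t)}$, the exponential bound from Corollary \ref{cor:expenbmr0}, then optimization in $\theta$ up to the critical threshold $1/(t\Lambda(t))$. The paper merely reparametrizes $\theta = \delta/(t\Lambda(t))$ with $\delta \in [0,1)$ before taking $r\to\infty$ and $\delta\uparrow 1$, which is the same limit in a slightly different dress.
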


\begin{proof}
For $\theta \geq 0$ and $r>0$ it follows from Markov's inequality and Corollary \ref{cor:expenbmr0} that
\begin{equation}
\begin{split}
\mathbb{P}^x \lbrace X_t \notin B^0_r(x_0)\rbrace &= \mathbb{P}^x \lbrace r_0(X_t)\geq r\rbrace\\[2mm]
&= \mathbb{P}^x \lbrace e^{\frac{\theta }{2}r_0^2(X_t)}\geq e^{\frac{\theta }{2}r^2}\rbrace\\[2mm]
&\leq e^{-\frac{\theta }{2}r^2}\mathbb{E}^x\left[e^{\frac{\theta }{2}r_0^2(X_t)}\right] \\
&\leq \left(1-\theta {\lambda}(t) e^{{\lambda} t}\right)^{-\frac{{\nu}}{2}} \exp \left(\frac{\theta r_0^2(x) e^{{\lambda} t}}{2(1-\theta {\lambda}(t) e^{{\lambda} t})}-\frac{\theta r^2}{2}\right)\nonumber
\end{split}
\end{equation}
so long as $\theta {\lambda}(t) e^{{\lambda} t}<1$. If $t>0$ then choosing $\theta =\delta ({\lambda}(t)e^{{\lambda} t})^{-1} $ shows that for any $\delta \in \left[0,1\right)$ and $r>0$ we have the estimate
\begin{equation}\label{eq:concenineq}
\mathbb{P}^x \lbrace X_t \notin B^0_r(x_0) \rbrace \leq (1-\delta)^{-\frac{{\nu}}{2}}\exp\left(\frac{r_0^2(x)\delta e^{\lambda t}}{2(1-\delta)t{\Lambda}(t)}-\frac{\delta r^2}{2t\Lambda(t)}\right)
\end{equation}
from which the theorem follows, since $\delta$ can be chosen arbitrarily close to $1$ after taking the limit.
\end{proof}

\subsection{Exit time estimate}

For the case $\lambda_\epsilon \geq 0$, here is the exit time estimate:

\begin{theorem}\label{thm:exittime}
Fix $r>0$ and for each $\epsilon >0$ suppose ${\nu_\epsilon}\geq 1$ and ${\lambda_\epsilon} \geq 0$ are constants such that the inequality \eqref{eq:quadbound} holds on the ball $B^\epsilon_r(x_0)$ with $(\nu_\epsilon,\lambda_\epsilon) \rightarrow (\nu, \lambda )$ as $\epsilon \rightarrow 0$. Then
\begin{equation}
\mathbb{P}^x\bigg\lbrace \sup_{s\in \left[0,t\right]} r_0(X_s) \geq r \bigg\rbrace\leq (1-\delta)^{-\frac{{\nu}}{2}}\exp\left(\frac{r_0^2(x)\delta e^{\lambda t}}{2(1-\delta){t \Lambda}(t)}-\frac{\delta r^2}{2t\Lambda(t)}\right)
\end{equation}
for all $t>0$ and $\delta \in (0,1)$.
\end{theorem}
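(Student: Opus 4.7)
The plan is to adapt the method of Theorem \ref{th:expenbm} by replacing the exhaustion exit times $\tau_{D_i}$ with a genuine stopping time (the exit from a slightly shrunken ball), apply Markov's inequality, and then pass to the sub-Riemannian limit. For each $\epsilon > 0$ and each $\eta \in (0,r)$, set $\tau^\epsilon_{r-\eta} := \inf\{s \geq 0 : r_\epsilon(X_s) \geq r-\eta\}$, the first exit time of $X$ from $B^\epsilon_{r-\eta}(x_0)$; since $B^\epsilon_{r-\eta}(x_0) \subset B^\epsilon_r(x_0)$, the hypothesis \eqref{eq:quadbound} remains in force on this smaller ball.

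The proofs of Theorems \ref{th:secradmomthm}, \ref{th:evenradmomthm} and \ref{th:expenbm} may then be repeated essentially verbatim with $\tau_{D_i}$ replaced by $\tau^\epsilon_{r-\eta}$: the It\^o integrals are true martingales because $r_\epsilon$ is bounded by $r-\eta$ up to $\tau^\epsilon_{r-\eta}$, and the induction on $p \in \mathbb{N}$ via Gronwall closes because the hypothesis $\lambda_\epsilon \geq 0$ allows the truncated moments $\mathbb{E}^x[\mathbf{1}_{\{s<\tau^\epsilon_{r-\eta}\}} r_\epsilon^{2p}(X_s)]$ appearing in the drift to be dominated by the stopped moments $\mathbb{E}^x[r_\epsilon^{2p}(X_{s\wedge\tau^\epsilon_{r-\eta}})]$ without reversing the inequality sign. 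Summing the resulting Laguerre bounds gives
\begin{equation}
\mathbb{E}^x\!\left[\exp\!\left(\frac{\theta}{2} r_\epsilon^2(X_{t\wedge\tau^\epsilon_{r-\eta}})\right)\right] \leq (1-\theta t\Lambda_\epsilon(t))^{-\nu_\epsilon/2} \exp\!\left(\frac{\theta r_\epsilon^2(x) e^{\lambda_\epsilon t}}{2(1-\theta t\Lambda_\epsilon(t))}\right),
\end{equation}
valid for $\theta t\Lambda_\epsilon(t) < 1$. Since path continuity forces $r_\epsilon(X_{\tau^\epsilon_{r-\eta}}) = r-\eta$ on $\{\tau^\epsilon_{r-\eta} \leq t\}$, Markov's inequality with the optimal choice $\theta = \delta/(t\Lambda_\epsilon(t))$, combined with the pointwise bound $r_\epsilon(x) \leq r_0(x)$, produces
\begin{equation}
\mathbb{P}^x\!\left\{\sup_{s\in[0,t]} r_\epsilon(X_s) \geq r-\eta\right\} \leq (1-\delta)^{-\nu_\epsilon/2}\exp\!\left(\frac{\delta r_0^2(x) e^{\lambda_\epsilon t}}{2(1-\delta) t\Lambda_\epsilon(t)} - \frac{\delta (r-\eta)^2}{2 t\Lambda_\epsilon(t)}\right).
\end{equation}

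It then remains to take $\epsilon \downarrow 0$ followed by $\eta \downarrow 0$. Pointwise monotone convergence $r_\epsilon \uparrow r_0$ combined with continuity of sample paths yields the monotone equality $\{\sup_{s\leq t} r_0(X_s) > r-\eta\} = \bigcup_{\epsilon>0}\{\sup_{s\leq t} r_\epsilon(X_s) > r-\eta\}$, so continuity of measure as $\epsilon \downarrow 0$ upgrades the previous display (with $\geq$ weakened to $>$) to a bound on $\mathbb{P}^x\{\sup_{s\leq t} r_0(X_s) > r-\eta\}$; a second application of continuity of measure as $\eta \downarrow 0$, using $\{\sup_{s\leq t} r_0(X_s) > r-\eta\} \downarrow \{\sup_{s\leq t} r_0(X_s) \geq r\}$, then finishes the proof. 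The main obstacle I foresee is precisely this limit step: the inclusion $r_\epsilon \leq r_0$ runs the wrong way for a direct transfer of the bound, which is why the auxiliary shrinkage by $\eta$ and the careful handling of strict versus weak sup-inequalities are essential rather than cosmetic.
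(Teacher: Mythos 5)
Your proposal is correct and follows essentially the same route as the paper: stop the process at the exit time of the ball, rerun the Gronwall induction for the stopped moments (using $\lambda_\epsilon \geq 0$ to dominate the truncated moments by the stopped ones), sum to the Laguerre/exponential bound, apply Markov's inequality with $\theta = \delta/(t\Lambda_\epsilon(t))$, and pass to the sub-Riemannian limit. Your auxiliary shrinkage by $\eta$ and the monotone-class handling of strict versus weak inequalities is a more careful execution of the limit $\epsilon \downarrow 0$ that the paper dispatches in one line, but it is a refinement of the same argument rather than a different one.
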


\begin{proof}
The proof requires a slight modification of the argument we used to derive Theorems \ref{th:expenbm} and \ref{th:concenthm}. In particular, denoting by $\tau_r$ the first exit time of $X$ from the ball and applying the It\^{o} formula as in Theorem \ref{th:evenradmomthm}, we use the assumption $\lambda_\epsilon \geq 0$ to obtain the slightly different estimate
\begin{equation}
\begin{split}
\E^x\left[r_\epsilon^{2\mom} (X_{t\wedge \tau_{r}})\right] \leq \, r_\epsilon^{2\mom} (x) &+ \frac{\mom}{2} \left({\nu_\epsilon} + 2\left(\mom-1\right)\right)\int_{0}^{t } \E^x\left[r_\epsilon^{2\mom-2}(X_{s\wedge \tau_{r}})\right]ds\\
& +\frac{\mom {\lambda_\epsilon}}{2}  \int_{0}^{t} \E^x\left[r_\epsilon^{2\mom}(X_{s\wedge \tau_{r}})\right] ds
\end{split}
\end{equation}
which, following the inductive argument of earlier, yields moment estimates which can then be summed, as in Theorem \ref{th:expenbm}, to obtain an exponential estimate for the stopped process. Taking the limit as $\epsilon \downarrow 0$, it follows that the right-hand side of the inequality
\begin{equation}
e^{\frac{\theta}{2}r^2}  \mathbb{P}^x\bigg\lbrace \sup_{s\in \left[0,t\right]} r_0(X_s) \geq r \bigg\rbrace \leq \E^x\left[e^{\frac{\theta}{2}r_0^2(X_{t\wedge \tau_r})}\right]
\end{equation}
is bounded by the right-hand side of \eqref{eq:exponential2r0}. Choosing $\theta$ as in the proof of Theorem \ref{th:concenthm} yields the desired estimate.
\end{proof}

\section{Metric measure spaces}\label{sec:mms}

We next consider the setting of an $\mathrm{RCD}^*(K, N)$ space $(X, d, \mathfrak{m})$, meaning a geodesic metric measure space having a notion of a lower Ricci curvature bound $K \in \mathbb{R}$ together with a notion of an upper bound $N \in [1,\infty)$ on dimension. We will give a concise introduction to this setting based on that of \cite{KK2018}, whose It\^{o} formula we will describe in the next subsection.

We start with a metric measure space $(X,d,\mathfrak{m})$, meaning $(X,d)$ is a complete, separable metric space and $\mathfrak{m}$ a $\sigma$-finite Borel measure on $X$. Suppose $\mathfrak{m}(B_r(x)) \in (0,\infty)$ for any metric ball $B_r(x)$ of radius $r>0$ centred at $x \in X$. Suppose $d$ is a geodesic distance, meaning that for any $x_0,x_1 \in X$ there exists $\gamma :[0,1]\rightarrow X$ such that $\gamma(0) = x_0,\gamma(1)=x_1$ and $d(\gamma(s),\gamma(t)) = |s-t|d(x_0,x_1)$. Denote by $\mathcal{C}^{\textrm{Lip}}(X)$ the Lipschitz functions on $X$ and define Cheeger’s energy functional $\mathrm{Ch}: L^2(X;\mathfrak{m}) \rightarrow [0,\infty]$ by
\begin{equation}
\mathrm{Ch}(f) := \frac{1}{2}\inf\bigg\lbrace \liminf_{n\rightarrow \infty} \int_X |Df_n|^2 d\mathfrak{m} : f_n \in \mathcal{C}^{\textrm{Lip}}(X) \cap L^2(X;\mathfrak{m}),\, f_n \xrightarrow{L^2(X,\mathfrak{m})} f \bigg\rbrace
\end{equation}
with domain given by the Sobolev space
\begin{equation}
\mathcal{D}(\mathrm{Ch}) := \lbrace f \in L^2(X;\mathfrak{m}) : \mathrm{Ch}(f) < \infty \rbrace
\end{equation}
where $|Dg|:X\rightarrow [0,\infty]$ is the local Lipschitz constant of $g :X \rightarrow \mathbb{R}$ defined by
\begin{equation}
|Dg|(x) := \limsup_{y \rightarrow x} \frac{|g(x) - g(y)|}{d(x,y)}.
\end{equation}
For $f \in L^2(X;\mathfrak{m})$ with $\mathrm{Ch}(f) < \infty$ there exists some $|Df|_w \in L^2(X;\mathfrak{m})$ such that
\begin{equation}
\mathrm{Ch}(f) = \frac{1}{2}\int_X |Df|_w^2 \,d\mathfrak{m}
\end{equation}
and we call $|Df|_w$ the \emph{minimal weak upper gradient} of $f$. We call $(X,d,\mathfrak{m})$ \textit{infinitesimally Hilbertian} if $\textrm{Ch}$ satisfies the parallelogram law, in which case the minimal weak upper gradient also satisfies the parallelogram law and there exists a bilinear form
\begin{equation}
\langle D \cdot,D\cdot\rangle : \mathcal{D}(\mathrm{Ch}) \times \mathcal{D}(\mathrm{Ch}) \rightarrow L^1(X;\mathfrak{m})
\end{equation}
such that $\langle D f,Df\rangle = |Df|^2_w$. We denote by $\Delta$ the (non-positive definite) self-adjoint operator associated to $2\textrm{Ch}$, with domain
\begin{equation}
\mathcal{D}(\Delta) := \bigg\lbrace f \in \mathcal{D}(\mathrm{Ch}) : \exists h \in L^2(X,\mathfrak{m}),\, 2\mathrm{Ch}(f,g) = - \int_X hg\, d \mathfrak{m},\, \forall g \in \mathcal{D}(\mathrm{Ch})\bigg\rbrace
\end{equation}
with $\Delta f := h$ for any $f \in \mathcal{D}(\Delta)$.

\begin{definition}\label{def:RCD}
Suppose $K \in \mathbb{R}$ and $N \in [1,\infty)$. We say that $(X,d,\mathfrak{m})$ is an $\mathrm{RCD}^*(K,N)$ space if it satisfies the following conditions:
\begin{enumerate}
\item[i)] It is infinitesimally Hilbertian;
\item[ii)] There exists $x_0\in X$ and constants $c_1,c_2>0$ such that $\mathfrak{m}(B_r(x_0)) \leq c_1 e^{c_2 r^2}$ for all $r>0$;
\item[iii)] Any $f \in \mathcal{D}(\mathrm{Ch})$ satisfying $|Df|_w\leq 1$ $\mathfrak{m}$-a.e. has a $1$-Lipschitz representative;
\item[iv)] For any $f \in \mathcal{D}(\Delta)$ with $\Delta f \in \mathcal{D}(\mathrm{Ch})$ and $g \in \mathcal{D}(\Delta) \cap L^\infty(X;\mathfrak{m})$ with $g \geq 0$ and $\Delta g \in L^\infty(X;\mathfrak{m})$, there is the weak Bochner inequality
\begin{equation}
\frac{1}{2}\int_X |Df|^2 \Delta g \,d\mathfrak{m} - \int_X \langle Df,D\Delta f\rangle g \,d\mathfrak{m} \geq K\int_X |Df|^2_w g \,d\mathfrak{m} + \frac{1}{N}\int_X |\Delta f|^2 g \,d \mathfrak{m}.
\end{equation}
\end{enumerate}
\end{definition}

In such a setting $(\mathrm{Ch},\mathcal{D}(\mathrm{Ch}))$ is a strongly local regular Dirichlet form, which by $\textrm{ii)}$ and \cite[Theorem~4.20]{AGS2014} is conservative. From now on we fix an $\mathrm{RCD}^*(K,N)$ space $(X,d,\mathfrak{m})$.

\subsection{It\^{o} formula}

Let $((X_t)_{t \geq 0}, (\mathbb{P}_x)_{x\in X})$ be the diffusion process canonically associated with $(\mathrm{Ch},\mathcal{D}(\mathrm{Ch}))$. Note that for a bounded measurable function $f$, Fukushima's theory of Dirichlet forms implies the correspondence $P_tf(x) = \E^x[f(X_t)]$ only for $\mathrm{Ch}$-a.e. $x \in X$, where by $P_t$ we mean the semigroup generated by $\frac{1}{2}\Delta$, defined a priori via spectral theory. However it has recently been shown in \cite{Guneysu2019}, using the assumption $N<\infty$, that $P_t$ maps $L^\infty(X)$ to $C(X)$ and that the formula actually holds pointwise, for each $x \in X$. In particular, and as in the previous section, by $\mathbb{P}_x$ we mean the probability measure on the space of continuous paths which has as its transition density the jointly continuous kernel of the semigroup $P_t$.

Now fix $x_0 \in X$ and define $r(x):=d(x_0,x)$, as before. Kuwada and Kuwae recently proved an It\^{o} formula for the radial part of the diffusion. For this, they set
\begin{equation}
k :=
\begin{cases}
\frac{K}{N-1}, &N>1 \\
0, &N=1
\end{cases}
\end{equation}
and define
\begin{equation}
\cot_{k}(r) := \frac{\phi'_{k}(r)}{\phi_{k}(r)}
\end{equation}
with $\phi_k(r)$ defined as in Example \ref{ex:reeb}. Then, as \cite[Corollary~5.5]{KK2018}, they prove that there exists a standard one-dimensional Brownian motion $B$ and a positive continuous additive functional $A$ such that, for all suitable functions $f \in C^2(\mathbb{R})$, there is the formula
\begin{align}
f(r(X_t)) =\,& f(r(X_0)) + \int_0^t f'(r(X_s))dB_s + \frac{1}{2}\int_0^t f''(r(X_s))ds \\
&+ \frac{N-1}{2}\int_0^t f'(r(X_s))\cot_{k}(r(X_s))ds - \frac{1}{2}\int_0^t f'(r(X_s))dA_s\label{eq:itomms}
\end{align}
for all $t \geq 0$, $\mathbb{P}_x$-almost surely for all $x \in X$. This formula is, in particular, valid for all functions $f$ of the form $f(r) = r^{2p}$ with $p\geq 1$.

\subsection{Exponential and exit time estimates}

Using formula \eqref{eq:itomms} we can derive various estimates of precisely the form considered in the previous section, including the second radial moments and other higher even moments. To avoid extensive repetition, let us skip straight to the corresponding exponential estimate, in which we denote by $\tau_D$ the first exit time of $X_t$ from a compact set $D$.

\begin{theorem}\label{th:expenbmII}
Set $\lambda := \frac{1}{2}\sqrt{(N-1)K^-}$. Then
\begin{equation}\label{eq:exponential3}
\mathbb{E}^x \left[e^{\frac{\theta}{2} r^2 (X_{t\wedge \tau_D})}\right] \leq \left(1-\theta t\Lambda(t)\right)^{-\frac{{N + \lambda}}{2}}\exp\left(\frac{\theta r^2(x)e^{{\lambda} t}}{2(1-\theta t\Lambda(t))}\right)
\end{equation}
for all $t,\theta \geq 0$ such that $\theta t \Lambda(t) <1$, where $\Lambda(t):=(e^{{\lambda} t}-1)/\lambda t$.
\end{theorem}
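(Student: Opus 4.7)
The plan is to transcribe the proof strategy of Section \ref{sec:subriem} into the RCD setting: derive a quadratic-type drift bound from the It\^o formula \eqref{eq:itomms}, use it to obtain a differential inequality for the stopped even moments $g_p(t) := \E^x[r^{2p}(X_{t\wedge\tau_D})]$, apply Gronwall inductively on $p$ to get Laguerre-polynomial moment estimates, and sum the resulting series via \eqref{eq:lagproptwo}. Compactness of $D$ plays the role that the exhaustion $\{D_i\}$ played earlier: it keeps the stopped process bounded and makes all relevant stochastic integrals genuine martingales, eliminating the need for a limiting argument.

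The one non-routine step is to extract from \eqref{eq:itomms} the analogue of \eqref{eq:quadbound}, namely an estimate of the form $\tfrac{1}{2}\Delta r^{2p} \leq p(\nu + 2(p-1))r^{2p-2} + p\lambda r^{2p}$ with $\nu = N + \lambda$. To this end I would apply \eqref{eq:itomms} with $f(r) = r^{2p}$, producing a drift equal to $p(2p-1)r^{2p-2} + p(N-1)r^{2p-1}\cot_k(r)$. The key analytic input is the classical inequality $x\coth(x) \leq 1+x$ for $x \geq 0$ (which follows from $1 - e^{-2x} \leq 2x$); this yields $(N-1)r\cot_k(r) \leq (N-1) + 2\lambda r$, and combining with $r \leq \tfrac{1}{2}(1+r^2)$ then gives the desired bound, with $\nu = N+\lambda$ emerging as the linear term is absorbed into the $r^{2p-2}$ coefficient.

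Once this bound is in hand, stopping \eqref{eq:itomms} at $\tau_D$, discarding the non-positive $-\frac{1}{2}\int f'(r(X_s))dA_s$ term (since $A$ is non-decreasing and $r \geq 0$), and taking expectations produces the analogue of the differential inequality \eqref{eq:togron}. The inductive Gronwall argument of Theorem \ref{th:evenradmomthm} then runs verbatim to give
\begin{equation*}
g_p(t) \leq (2t\Lambda(t))^{p}\, p!\, L^{(N+\lambda)/2 - 1}_p\!\left(-\frac{r^2(x) e^{\lambda t}}{2t\Lambda(t)}\right),
\end{equation*}
after which multiplying by $\theta^p/(2^p p!)$, summing in $p$, and invoking \eqref{eq:lagproptwo} exactly as in Theorem \ref{th:expenbm} delivers \eqref{eq:exponential3}; monotone convergence justifies the interchange of sum and expectation.

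I expect the first step to be where the only real care is needed, in particular in verifying that the constants $N+\lambda$ and $\lambda$ appearing in the final estimate are precisely those that emerge from the Kuwada--Kuwae comparison, and that the sign of $K$ enters solely through $K^-$ so that the positive-Ricci case is subsumed with $\lambda = 0$. The remainder is essentially a direct transcription of the arguments in Section \ref{sec:subriem}.
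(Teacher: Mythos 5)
Your proposal is correct and follows essentially the same route as the paper: apply the It\^o formula \eqref{eq:itomms} with $f(r)=r^{2p}$, bound the drift via $r\cot_k r\leq 1+\sqrt{k^-}r\leq 1+\tfrac{\sqrt{k^-}}{2}(1+r^2)$ (equivalent to your $x\coth x\leq 1+x$ step), drop the non-negative local-time term, and then run the Gronwall induction and Laguerre summation from Theorems \ref{th:secradmomthm}, \ref{th:evenradmomthm} and \ref{th:expenbm} on the stopped process. Your observation that compactness of $D$ replaces the exhaustion argument and that $\lambda=\tfrac{1}{2}\sqrt{(N-1)K^-}\geq 0$ makes the $\lambda_\epsilon\geq 0$ hypothesis of the foliation case automatic are both accurate and in line with the paper's proof.
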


\begin{proof}
We see that
\begin{equation}
r \cot_{k} r \leq 1 + \sqrt{k^-} r \leq 1 +  \frac{\sqrt{k^-}}{2}\left(1+r^2\right).
\end{equation}
By formula \eqref{eq:itomms} applied to the function $f(r) = r^{2p}$ we obtain therefore
\begin{align}
r^{2p}(X_{t\wedge \tau_D}) =\,& r^{2p}(X_0) + 2p \int_0^{t\wedge \tau_D} r^{2p-1}(X_s)dB_s + p(2p-1)\int_0^{t\wedge \tau_D} r^{2p-2}(X_s)ds \\
&+ p(N-1)\int_0^{t\wedge \tau_D} r^{2p-1}(X_s)\cot_{k}(r(X_s))ds - p\int_0^{t\wedge \tau_D} r^{2p-1}(X_s)dA_s \\
\leq\,& r^{2p}(X_0) + 2p \int_0^{t\wedge \tau_D} r^{2p-1}(X_s)dB_s \\
& + p\left(N + \lambda+2(p-1)\right)\int_0^t r^{2p-2}(X_{s\wedge \tau_D})ds + p\lambda \int_0^t r^{2p}(X_{s\wedge \tau_D})ds
\end{align}
for all $t \geq 0$. We can now proceed, as in the proofs of Theorem \ref{th:secradmomthm} and \ref{th:evenradmomthm}, to obtain estimates on the even radial moments which can then be summed, as in the proof of Theorem \ref{th:expenbm}, to obtain the claimed inequality.
\end{proof}

As a corollary, we obtain an analogue of the concentration inequality Theorem \ref{th:concenthm}:

\begin{theorem}\label{th:concenthmmms}
Set $\lambda := \frac{1}{2}\sqrt{(N-1)K^-}$. Then
\begin{equation}
\lim_{r\rightarrow \infty} \frac{1}{r^2}\log \mathbb{P}^x \lbrace X_t \notin B_r(x_0) \rbrace \leq -\frac{1}{2t \Lambda(t)}
\end{equation}
for all $t>0$, where $\Lambda(t):=(e^{{\lambda} t}-1)/\lambda t$.
\end{theorem}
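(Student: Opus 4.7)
The plan is to repeat verbatim the argument used in the proof of Theorem \ref{th:concenthm}, with Theorem \ref{th:expenbmII} playing the role of Corollary \ref{cor:expenbmr0}. The only real difference is that Theorem \ref{th:expenbmII} is stated for the stopped process $X_{t\wedge\tau_D}$ rather than $X_t$ itself, so a preliminary limiting step is needed.

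First I would pick an exhaustion $\{D_n\}_{n\in\mathbb{N}}$ of $X$ by compact sets with $D_n \nearrow X$ and apply Theorem \ref{th:expenbmII} on each $D_n$. Since $(X,d,\mathfrak{m})$ is an $\mathrm{RCD}^*(K,N)$ space, the associated Dirichlet form is conservative (as noted in the text just before Section~2.1), so $\tau_{D_n}\nearrow \infty$ $\mathbb{P}_x$-a.s. Monotone convergence in $n$ then upgrades \eqref{eq:exponential3} to the unstopped bound
\begin{equation}
\mathbb{E}^x\!\left[e^{\frac{\theta}{2} r^2(X_t)}\right] \leq \left(1-\theta t\Lambda(t)\right)^{-\frac{N+\lambda}{2}}\exp\!\left(\frac{\theta r^2(x)e^{\lambda t}}{2(1-\theta t\Lambda(t))}\right)
\end{equation}
valid for all $t,\theta\geq 0$ with $\theta t\Lambda(t)<1$.

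Second, I would combine this with Markov's inequality exactly as in Theorem \ref{th:concenthm}: for $\theta\geq 0$ and $r>0$,
\begin{equation}
\mathbb{P}^x\{X_t\notin B_r(x_0)\} = \mathbb{P}^x\{r(X_t)\geq r\} \leq e^{-\frac{\theta}{2}r^2}\,\mathbb{E}^x\!\left[e^{\frac{\theta}{2}r^2(X_t)}\right].
\end{equation}
Substituting the exponential bound and then choosing $\theta = \delta/(t\Lambda(t))$ for $\delta\in(0,1)$ (which satisfies $\theta t\Lambda(t)=\delta<1$) yields the concentration inequality
\begin{equation}
\mathbb{P}^x\{X_t\notin B_r(x_0)\} \leq (1-\delta)^{-\frac{N+\lambda}{2}}\exp\!\left(\frac{r^2(x)\delta e^{\lambda t}}{2(1-\delta)t\Lambda(t)}-\frac{\delta r^2}{2t\Lambda(t)}\right).
\end{equation}

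Third, taking logarithms, dividing by $r^2$ and letting $r\to\infty$ gives
\begin{equation}
\limsup_{r\to\infty}\frac{1}{r^2}\log \mathbb{P}^x\{X_t\notin B_r(x_0)\} \leq -\frac{\delta}{2t\Lambda(t)},
\end{equation}
and since $\delta\in(0,1)$ was arbitrary, letting $\delta\uparrow 1$ concludes the proof. There is no serious obstacle here: the only point that requires attention is the passage from the stopped to the unstopped exponential estimate, which is justified by conservativeness together with monotone convergence, exactly as in the Riemannian foliation setting of Section \ref{sec:subriem}.
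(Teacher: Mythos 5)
Your proposal is correct and reconstructs exactly what the paper leaves implicit: the paper states this result as an unproved corollary, expecting the reader to repeat the argument of Theorem \ref{th:concenthm} using Theorem \ref{th:expenbmII} in place of Corollary \ref{cor:expenbmr0}. You are also right that a limiting step in $D$ is needed to pass from the stopped to the unstopped exponential estimate, a point the paper glosses over.

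One small technical remark. As stated, monotone convergence does not apply directly to $\mathbb{E}^x\!\left[e^{\frac{\theta}{2}r^2(X_{t\wedge\tau_{D_n}})}\right]$, since $r(X_{t\wedge\tau_{D_n}})$ is not monotone in $n$: on $\{\tau_{D_n}<t\}$ it sits on the boundary of $D_n$ and grows with $n$, and once $\tau_{D_n}>t$ it drops to $r(X_t)$. The passage to the limit is still immediate, but via Fatou's lemma: conservativeness gives $X_{t\wedge\tau_{D_n}}\to X_t$ a.s., so
\begin{equation}
\mathbb{E}^x\!\left[e^{\frac{\theta}{2}r^2(X_t)}\right] \leq \liminf_{n\to\infty}\mathbb{E}^x\!\left[e^{\frac{\theta}{2}r^2(X_{t\wedge\tau_{D_n}})}\right],
\end{equation}
and the right-hand side is bounded by the $n$-independent expression in \eqref{eq:exponential3}. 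Alternatively, one can run monotone convergence on $\mathbf{1}_{\{t<\tau_{D_n}\}}e^{\frac{\theta}{2}r^2(X_t)}\leq e^{\frac{\theta}{2}r^2(X_{t\wedge\tau_{D_n}})}$, mirroring the indicator device in the proof of Theorem \ref{th:secradmomthm}. Either fix is one line; the rest of your argument — Markov's inequality, the choice $\theta=\delta/(t\Lambda(t))$, taking logs, dividing by $r^2$, and letting $\delta\uparrow 1$ — is exactly right. You might also note that the same conclusion follows even more directly from Theorem \ref{thm:etemmm} via the inclusion $\{X_t\notin B_r(x_0)\}\subseteq\{\sup_{s\leq t}r(X_s)\geq r\}$, which avoids removing the stopping time at all.
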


Furthermore we obtain, by Markov's inequality, the following exit time estimate:

\begin{theorem}\label{thm:etemmm}
Set $\lambda := \frac{1}{2}\sqrt{(N-1)K^-}$. Then
\begin{equation}
\mathbb{P}^x\bigg\lbrace \sup_{s\in \left[0,t\right]} r(X_s) \geq r \bigg\rbrace\leq (1-\delta)^{-\frac{{N+\lambda}}{2}}\exp\left(\frac{r^2(x)\delta e^{\lambda t}}{2(1-\delta)t\Lambda(t)}-\frac{\delta r^2}{2t\Lambda(t)}\right)
\end{equation}
for all $t>0$ and $\delta \in (0,1)$.
\end{theorem}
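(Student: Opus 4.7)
The plan is to combine the exponential estimate of Theorem \ref{th:expenbmII} for the stopped process with an exponential Markov (Chebyshev) inequality, mirroring the derivation of Theorem \ref{thm:exittime} in the sub-Riemannian setting.

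First, let $\tau_r$ denote the first exit time of $X$ from the open ball $B_r(x_0)$. Because paths are continuous, the event $\bigl\{\sup_{s\in[0,t]} r(X_s) \geq r\bigr\}$ agrees with $\{\tau_r \leq t\}$ up to a $\mathbb{P}^x$-null set, and on this event one has $r(X_{t \wedge \tau_r}) \geq r$. Consequently, for any $\theta \geq 0$ Markov's inequality gives
\begin{equation}
e^{\theta r^2/2}\,\mathbb{P}^x\bigg\lbrace \sup_{s\in[0,t]} r(X_s) \geq r \bigg\rbrace \leq \mathbb{E}^x\left[e^{\theta r^2(X_{t \wedge \tau_r})/2}\right].
\end{equation}

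Second, I would apply Theorem \ref{th:expenbmII} with $D = \overline{B_r(x_0)}$, which is compact in the RCD*(K,N) setting since $N<\infty$ ensures properness of $(X,d)$ via Bishop--Gromov. For $\theta$ such that $\theta t \Lambda(t) < 1$ this bounds the right-hand side by
\begin{equation}
(1-\theta t\Lambda(t))^{-(N+\lambda)/2}\exp\left(\frac{\theta r^2(x)e^{\lambda t}}{2(1-\theta t\Lambda(t))}\right).
\end{equation}

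Third, I would optimize by choosing $\theta = \delta/(t\Lambda(t))$ with $\delta \in (0,1)$, so that $\theta t \Lambda(t) = \delta$. Dividing by $e^{\theta r^2/2}=e^{\delta r^2/(2t\Lambda(t))}$ and substituting then yields the claimed estimate. The argument is essentially routine given Theorem \ref{th:expenbmII}; the only minor subtlety is compactness of closed balls, which is standard for RCD*(K,N) spaces with $N<\infty$, so no substantial obstacle is expected.
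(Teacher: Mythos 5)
Your overall strategy (exponential Markov inequality for the stopped process, followed by optimization over $\theta$) is the right one, and it is what the paper itself implicitly does; the choice $\theta = \delta/(t\Lambda(t))$ and the resulting algebra are correct. However, there is a technical gap in the way you invoke Theorem~\ref{th:expenbmII}. In the Markov step you need the stopping time $\tau_r := \inf\{s : r(X_s)\geq r\}$, the first exit time from the \emph{open} ball $B_r(x_0)$, because on the event $\{\sup_{s\leq t} r(X_s)\geq r\}$ you have $r(X_{t\wedge\tau_r})=r$ by continuity. But Theorem~\ref{th:expenbmII} is stated for $\tau_D$, the first exit time from a \emph{compact} set $D$. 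Taking $D=\overline{B_r(x_0)}$ gives $\tau_D = \inf\{s : r(X_s)>r\}$, which in general strictly dominates $\tau_r$, and on paths with $\tau_r\leq t<\tau_D$ one has $r(X_{t\wedge\tau_r})=r\geq r(X_{t\wedge\tau_D})$, so the pointwise comparison goes the wrong way to deduce the bound on $\E^x[e^{\theta r^2(X_{t\wedge\tau_r})/2}]$ from the stated theorem. (Equivalently, trying to run the Markov inequality directly with $\tau_D$ only bounds $\mathbb{P}^x\{\tau_D\leq t\}$, which is not the claimed event.)

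The fix is small and is precisely what the paper does for the sub-Riemannian analogue Theorem~\ref{thm:exittime}: do not cite the statement of Theorem~\ref{th:expenbmII}, but observe that its proof applies verbatim to $\tau_r$, since the stopped process $X_{\cdot\wedge\tau_r}$ stays inside the compact set $\overline{B_r(x_0)}$ (using properness of RCD$^*(K,N)$ spaces), which is all that is needed to justify the martingale property of the stochastic integral and the integrability of the moment sequence. One then obtains $\E^x[e^{\theta r^2(X_{t\wedge\tau_r})/2}]\leq (1-\theta t\Lambda(t))^{-(N+\lambda)/2}\exp\!\left(\frac{\theta r^2(x)e^{\lambda t}}{2(1-\theta t\Lambda(t))}\right)$, after which your Markov step and the substitution $\theta=\delta/(t\Lambda(t))$ complete the argument. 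So the proposal's structure is sound, but it should be phrased as re-running the argument of Theorem~\ref{th:expenbmII} with $\tau_r$ rather than as a direct application of that theorem's statement.
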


Using this estimate, we can deduce an upper bound for the law of the iterated logarithm. Laws of the iterated logarithm have been recently proved by Kim, Kumagai and Wang in \cite{KimKumagaiWang2017} for the general setting of a metric measure space, assuming a volume doubling condition and suitable bounds on the heat kernel. We assume the $\mathrm{RCD}^*(K,N)$ condition with $K\geq 0$. In this case, we have the following upper bound for the law of iterated logarithm, which follows either from the radial comparison \cite[Theorem~6.1]{KK2018} or from Theorem \ref{thm:etemmm} and the argument given below:

\begin{corollary}
Suppose $K\geq 0$. Then
\begin{equation}
\limsup_{t\rightarrow \infty} \frac{d(X_t,x_0)}{\sqrt{2t\log\log t}} \leq 1
\end{equation}
$\mathbb{P}^{x_0}$-almost surely.
\end{corollary}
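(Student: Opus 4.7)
The plan is to apply the exit time estimate of Theorem \ref{thm:etemmm} with starting point $x=x_0$. Because $K\geq 0$ we have $\lambda=0$, so (by l'H\^{o}pital) $\Lambda(t)\equiv 1$, and since $r(x_0)=0$ the ``drift'' term in the exponent vanishes. The upshot is the clean Gaussian tail bound
$$\mathbb{P}^{x_0}\!\left\{\sup_{s\in[0,t]}r(X_s)\geq r\right\}\leq (1-\delta)^{-N/2}\exp\!\left(-\frac{\delta r^2}{2t}\right)$$
valid for every $\delta\in(0,1)$ and $r,t>0$. From here I would run the classical Borel--Cantelli argument behind the upper half of the law of the iterated logarithm.

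Concretely, fix $\varepsilon>0$, $q>1$, and choose $\delta:=(1+\varepsilon)^{-1}\in(0,1)$, which satisfies $\delta(1+\varepsilon)^2=1+\varepsilon>1$. Set $t_n:=q^n$ and $r_n:=(1+\varepsilon)\sqrt{2t_n\log\log t_n}$. The displayed bound specializes to
$$\mathbb{P}^{x_0}\!\left\{\sup_{s\in[0,t_n]}r(X_s)\geq r_n\right\}\leq (1-\delta)^{-N/2}(\log t_n)^{-(1+\varepsilon)}.$$
Since $\log t_n=n\log q$, the right-hand side is a constant multiple of $n^{-(1+\varepsilon)}$ and hence summable in $n$. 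Borel--Cantelli then produces a $\mathbb{P}^{x_0}$-a.s.\ finite random index $n_0$ such that $\sup_{s\in[0,t_n]}r(X_s)<r_n$ for every $n\geq n_0$.

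Finally I would interpolate. For each $t\geq t_{n_0}$ pick $n$ with $t_{n-1}\leq t\leq t_n$; then on the Borel--Cantelli event,
$$\frac{d(X_t,x_0)}{\sqrt{2t\log\log t}}\leq \frac{r_n}{\sqrt{2t_{n-1}\log\log t_{n-1}}}=(1+\varepsilon)\sqrt{q}\,\sqrt{\frac{\log\log t_n}{\log\log t_{n-1}}}.$$
Because $\log\log t_n/\log\log t_{n-1}=\log(n\log q)/\log((n-1)\log q)\to 1$, taking $\limsup_{t\to\infty}$ and then sending $q\downarrow 1$ and $\varepsilon\downarrow 0$ yields the claimed bound. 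There is no substantial obstacle; the only delicate ingredient is the collapse of the $r^2(x)$ term in Theorem \ref{thm:etemmm}, which is obtained for free by choosing the starting point $x=x_0$, so that $r(x)=0$.
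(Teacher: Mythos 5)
Your proof is correct and follows essentially the same route as the paper: apply Theorem~\ref{thm:etemmm} at the starting point $x_0$ (so the drift term vanishes and, with $K\geq 0$, $\lambda=0$ and $\Lambda\equiv 1$), then run a geometric-time Borel--Cantelli argument with $t_n=q^n$ and $\delta=(1+\epsilon)^{-1}$, and interpolate. The only cosmetic difference is the interpolation step, where you bound the ratio by $(1+\epsilon)\sqrt{q}\,\sqrt{\log\log t_n/\log\log t_{n-1}}$ rather than the paper's $(1+\epsilon)q$ via the monotonicity of $\psi(t)/t$; both yield the result after $q\downarrow 1$, $\epsilon\downarrow 0$.
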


\begin{proof}
The argument is taken from \cite[Chapter~5]{MortersPeres2010}. Indeed, fix $\epsilon > 0$, $q >1$ and set
\begin{equation}
A_n := \bigg\lbrace \sup_{s\in \left[0,t\right]} r(X_s) \geq (1+\epsilon) \psi(q^n)\bigg\rbrace
\end{equation}
where $\psi(t) := \sqrt{2t\log \log t}$. Then, by Theorem \ref{thm:etemmm}, for all $0\leq \delta <1$ we have
\begin{align}
\mathbb{P}^{x_0}\lbrace A_n \rbrace \leq (1-\delta)^{-\frac{N}{2}} (n \log q)^{-\delta(1+\epsilon)^2}.
\end{align}
Choosing $\delta$ sufficiently small, for example $\delta = 1/(1+\epsilon)$, we have $\sum_{n=1}^\infty \mathbb{P}^{x_0}\lbrace A_n \rbrace < \infty$ and therefore, by the Borel-Cantelli lemma, only finitely many $A_n$ occur. Consequently, for large $t$, we can write $q^{n-1} \leq t < q^n$ and estimate
\begin{equation}
\frac{d(X_t,x_0)}{\psi(t)} = \frac{d(X_t,x_0)}{\psi(q^n)}\frac{\psi(q^n)}{q^n}\frac{t}{\psi(t)}\frac{q^n}{t} \leq (1+\epsilon)q,
\end{equation}
since $\psi(t)/t$ is decreasing in $t$. The result follows, since $\epsilon>0$ and $q>1$ are arbitrary.
\end{proof}

We can also apply the exit time estimate Theorem \ref {thm:etemmm} to derive upper bounds for Schr\"{o}dinger eigenfunctions, along the lines of \cite{Carmona1978}. We do so in Subsection \ref{ss:upperbnds}. First we must calculate upper bounds on Feynman-Kac functionals with potentials belonging to some $L^p$ space.

\subsection{Feynman-Kac functionals}\label{ss:Feynman}

Sturm's work on the general theory of Dirichlet forms implies the existence of a locally H\"{o}lder continuous representative $p$ on $\supp \m \times \supp \m \times (0,\infty)$ of the heat kernel on $(X, d, \m)$ associated to $\tfrac{1}{2}\Delta$. See \cite[Proposition 2.3]{Sturm1995} and \cite[Corollary 3.3]{Sturm1996}. In this subsection we start with the following assumption:
\begin{itemize}
\item[(\textbf{A1})] There exists $1 \leq n < \infty$, $T_0>0$ and positive constants $c_3 := c_3(T_0)$ and $c_4:=c_4(T_0)$, possibly depending on $T_0$, such that
\begin{equation}
p_t(x,y)\leq c_3 t^{-\frac{n}{2}} \exp\left(-\frac{d^2(x,y)}{c_4 t}\right)
\end{equation}
for all $x,y \in \supp \m$ and $t\in (0,T_0]$.
\end{itemize}
Sharp Gaussian estimates on the heat kernel have been proved by Jiang, Li and Zhang \cite[Theorem 1.2]{JiangLiZhang2016}. In particular, their upper bound states that for any $\epsilon >0$ there exist $C_i := C_i(\epsilon,K,N)>1$ for $i=1,2$, depending only on $K,N$ and $\epsilon$, such that
\begin{equation}
p_t(x,y) \leq \frac{C_1}{\m(B_{\sqrt{t}}(x))} \exp\left(-\frac{d^2(x,y)}{(2+\epsilon)t} + C_2t\right)
\end{equation}
for all $x,y \in \supp \m$ and any $t>0$. Consequently, if $\m$ is lower $n$-Ahlfors regular for some $1\leq n <\infty$, meaning that there exists a constant $c_5>0$ such that
\begin{equation}
\m (B_r(x)) \geq c_5 r^n
\end{equation}
for any $0<r<D$ and for all $x \in X$, where $D$ denotes the diameter of $X$, then assumption (\textbf{A1}) is satisfied. In terms of the constant $n$ given by assumption (\textbf{A1}), we will additionally assume:
\begin{itemize}
\item[(\textbf{A2})] There exist constants $c_1,c_2>0$ such that
\begin{equation}
\m(B_r(x)) \leq c_1 r^n e^{c_2 r^2}
\end{equation}
for all $r >0$ and all $x \in \supp \m$.
\end{itemize}
This assumption replaces the second property in Definition \ref{def:RCD}. For an $\mathrm{RCD}^*(K, N)$ space satisfying assumptions (\textbf{A1}) and (\textbf{A2}), we can deduce various integral estimates for the heat kernel. We begin with the following lemma, which holds true on any metric measure space:

\begin{lemma}\label{lem:polarcoord}
For any $c >0$ we have
\begin{equation}
\int_X e^{-c d^2(x,y)} d\m(y) = 2c \int_0^\infty \m(B_r(x)) r e^{-cr^2} dr
\end{equation}
for all $x \in X$.
\end{lemma}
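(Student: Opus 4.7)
The plan is to rewrite the integrand as a one-dimensional integral over the radial variable and then swap the order of integration via Fubini. Specifically, since $\tfrac{d}{dr}\bigl(-e^{-cr^2}\bigr) = 2cr e^{-cr^2}$, I have the elementary identity
\begin{equation}
e^{-c d^2(x,y)} = \int_{d(x,y)}^{\infty} 2c r e^{-cr^2}\, dr = \int_0^{\infty} 2c r e^{-cr^2}\,\mathbf{1}_{\{r > d(x,y)\}}\, dr
\end{equation}
for every $y \in X$.

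Substituting this into the left-hand side and applying Tonelli's theorem (the integrand is non-negative, so no integrability issue arises), I obtain
\begin{equation}
\int_X e^{-c d^2(x,y)}\, d\m(y) = \int_0^\infty 2c r e^{-cr^2}\left(\int_X \mathbf{1}_{\{d(x,y) < r\}}\, d\m(y)\right) dr.
\end{equation}
The inner integral is exactly $\m(B_r(x))$, since $B_r(x) = \{y \in X : d(x,y) < r\}$. This yields the desired formula.

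There is essentially no obstacle: the argument uses only the fundamental theorem of calculus for the kernel $r \mapsto e^{-cr^2}$ and Tonelli's theorem for non-negative measurable functions on the product space $X \times (0,\infty)$. One minor point worth noting is that the boundary set $\{y : d(x,y) = r\}$ has no effect, since it enters only on a set of Lebesgue measure zero in $r$; consequently it is immaterial whether one works with open or closed balls in the statement.
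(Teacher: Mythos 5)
Your proof is correct. It takes a slightly different route from the paper: the paper applies the layer-cake formula $\int_X f\,d\m = \int_0^1 \m(\{f > \lambda\})\,d\lambda$ to $f = e^{-cd^2(x,\cdot)}$ (which is bounded by $1$), obtaining $\int_0^1 \m(B_{\phi(\lambda)}(x))\,d\lambda$ with $\phi(\lambda) = \sqrt{-\tfrac{1}{c}\log\lambda}$, and then performs the change of variables $r = \phi(\lambda)$ to arrive at the claimed identity. You instead write $e^{-cd^2(x,y)} = \int_0^\infty 2cr\,e^{-cr^2}\mathbf{1}_{\{r > d(x,y)\}}\,dr$ directly via the fundamental theorem of calculus and swap the order of integration with Tonelli. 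The two arguments are essentially the same Fubini computation viewed through different parameterizations — the paper parameterizes by the level $\lambda$ of the integrand while you parameterize by the radius $r$, and the paper's change of variables is exactly the dictionary between the two. Your version is marginally leaner since it avoids invoking the layer-cake representation and the explicit substitution; the paper's version makes the role of the distribution function of $d(x,\cdot)$ more visible. Your observation about the boundary set $\{d(x,y) = r\}$ being negligible for a.e. $r$ is a nice bit of care, though the paper sidesteps the issue by using a strict inequality in the layer-cake set from the outset.
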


\begin{proof}
By Fubini's theorem, we have
\begin{equation}
\int_X e^{-c d^2(x,y)} d\m(y) = \int_0^1 \m\left(\Big\lbrace y : e^{-cd^2(x,y)}>\lambda\Big\rbrace\right) d\lambda= \int_0^1 \m(B_{\phi(\lambda)}(x))d\lambda
\end{equation}
where $\phi(\lambda) := \sqrt{-\tfrac{1}{c}\log \lambda}$ for $\lambda \in (0,1)$. Since
\begin{equation}
\phi'(\lambda) = -\frac{1}{2c \phi(\lambda)}e^{c\phi^2(\lambda)}
\end{equation}
it follows, by the change of variables $r = \phi(\lambda)$, that
\begin{equation}
\int_0^1 \m(B_{\phi(\lambda)}(x))d\lambda = 2c\int_0^\infty \m(B_r(x)) r e^{-cr^2} dr
\end{equation}
as claimed.
\end{proof}

For $T>0$ with $q,q'>1$ and a non-negative measurable function $f:[0,T] \times X \rightarrow \R$ define
\begin{equation}
\|f\|_{L^q_{q'} ([0,t] \times X)} := \left( \int_0^t \|f_s(\cdot)\|^{q'}_{L^q(X)} ds\right)^{1/q'}
\end{equation}
for $t \in (0,T]$.

\begin{proposition}\label{prop:lqhk}
Suppose $q,q' >1$. Assume (\textbf{A1}) and suppose there exists $x_0 \in \supp \m$ and constants $c_1,c_2>0$ such that
\begin{equation}
\m(B_r(x_0)) \leq c_1 r^n e^{c_2 r^2}
\end{equation}
for all $r >0$. Choose $T_1>0$ sufficiently small so that
\begin{equation}\label{eq:cond1}
T_1 < T_0 \wedge \frac{q}{c_4 c_2}
\end{equation}
where $T_0$ is the constant determined by assumption (\textbf{A1}). Then there exists a positive constant $C_1:= C_1(q,c_1,c_2,c_3,c_4,n,T_1)$ such that
\begin{equation}
\| p_t(x_0,\cdot)\|_{L^q(X)} \leq C_1 t^{\frac{n}{2}(\frac{1}{q}-1)} \label{eq:lemfirst}
\end{equation}
for all $t \in (0,T_1]$. If in addition
\begin{equation}\label{eq:cond2}
\frac{1}{q} + \frac{2}{nq'}>1
\end{equation}
then furthermore there exists a positive constant $C_2:= C_2(q,q',c_1,c_2,c_3,c_4,n,T_1)$ such that
\begin{equation}
\|p.(x_0,\cdot)\|_{L^q_{q'}([0,t]\times X)} \leq C_2 t^{\frac{n}{2}(\frac{1}{q}-1)+\frac{1}{q'}} \label{eq:lemsecond}
\end{equation}
for all $t \in (0,T_1]$, in which case
\begin{equation}
\lim_{t \downarrow 0} \|p.(x_0,\cdot)\|_{L^q_{q'}([0,t]\times X)} = 0.
\end{equation}
Note that it is always possible to find a $T_1$ satisfying condition \eqref{eq:cond1}.
\end{proposition}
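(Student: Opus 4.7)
The plan is to estimate the $L^q$ norm by direct integration of the Gaussian upper bound in assumption (\textbf{A1}), using Lemma \ref{lem:polarcoord} to convert the spatial integral into a one-dimensional integral and then substituting in the volume growth hypothesis. Concretely, by (\textbf{A1}) we have, for $t \in (0, T_0]$,
\begin{equation}
\|p_t(x_0,\cdot)\|_{L^q(X)}^q \leq c_3^q\, t^{-\frac{nq}{2}} \int_X \exp\left(-\frac{q\, d^2(x_0,y)}{c_4 t}\right) d\m(y),
\end{equation}
so by Lemma \ref{lem:polarcoord} with $c = q/(c_4 t)$ the right-hand side equals
\begin{equation}
c_3^q\, t^{-\frac{nq}{2}} \cdot 2c \int_0^\infty \m(B_r(x_0))\, r\, e^{-c r^2}\, dr \leq 2c_1 c_3^q c\, t^{-\frac{nq}{2}} \int_0^\infty r^{n+1} e^{-(c - c_2) r^2}\, dr.
\end{equation}
The condition $T_1 < q/(c_4 c_2)$ is precisely what guarantees $c - c_2 > 0$ uniformly for $t \in (0, T_1]$, so that this Gaussian integral converges; evaluating it yields a factor of $\Gamma((n+2)/2)/(2(c-c_2)^{(n+2)/2})$. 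A short manipulation shows
\begin{equation}
c\,(c - c_2)^{-\frac{n+2}{2}} = \frac{q (c_4 t)^{n/2}}{(q - c_2 c_4 t)^{(n+2)/2}} \leq \frac{q\, c_4^{n/2}}{(q - c_2 c_4 T_1)^{(n+2)/2}}\, t^{n/2}
\end{equation}
uniformly for $t \in (0, T_1]$. Substituting back, the $L^q$-norm to the $q$-th power is bounded by a constant times $t^{-nq/2 + n/2}$, and taking the $q$-th root gives the claimed exponent $\frac{n}{2}(\frac{1}{q} - 1)$. All constants depend only on $q, c_1, c_2, c_3, c_4, n, T_1$, as required.

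For the second estimate, I would simply plug \eqref{eq:lemfirst} into the definition of the $L^q_{q'}$ norm:
\begin{equation}
\|p.(x_0,\cdot)\|_{L^q_{q'}([0,t]\times X)}^{q'} \leq C_1^{q'} \int_0^t s^{\frac{nq'}{2}\left(\frac{1}{q} - 1\right)} ds.
\end{equation}
The integrability of $s^{\frac{nq'}{2}(\frac{1}{q} - 1)}$ at zero is equivalent to $\frac{nq'}{2}(\frac{1}{q} - 1) > -1$, which after rearrangement is precisely condition \eqref{eq:cond2}. Under this condition the integral equals $t^{\frac{nq'}{2}(\frac{1}{q}-1) + 1}$ divided by a positive constant; taking the $q'$-th root gives the stated bound with exponent $\frac{n}{2}(\frac{1}{q} - 1) + \frac{1}{q'}$. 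Since this exponent is strictly positive by \eqref{eq:cond2}, the vanishing of the norm as $t \downarrow 0$ is immediate. The existence of a valid $T_1$ is trivial because $q/(c_4 c_2) > 0$.

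The main obstacle is really just the uniform control of $c/(c - c_2)^{(n+2)/2}$ in $t$; everything else is routine bookkeeping. One must be slightly careful to separate the $t$-dependence of $c$ from the cutoff provided by $T_1$, otherwise the resulting prefactor could blow up as $t \to T_1$. The choice $T_1 < q/(c_4 c_2)$ keeps the denominator $q - c_2 c_4 t$ bounded below by a positive constant throughout $(0, T_1]$, which is what makes the whole argument work and explains the precise form of condition \eqref{eq:cond1}.
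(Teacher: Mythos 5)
Your argument is correct and is essentially identical to the paper's own proof: both pass through assumption (\textbf{A1}), apply Lemma \ref{lem:polarcoord} with $c = q/(c_4 t)$, insert the volume bound, evaluate the resulting Gaussian integral as $\tfrac{1}{2}\Gamma(\tfrac{n}{2}+1)(c-c_2)^{-(\frac{n}{2}+1)}$, and use $T_1 < q/(c_4 c_2)$ to control the prefactor uniformly on $(0,T_1]$, with the second estimate obtained by integrating the first in time under condition \eqref{eq:cond2}. Your explicit remark that \eqref{eq:cond2} makes the final exponent strictly positive, hence the vanishing as $t\downarrow 0$, is a point the paper leaves implicit but is exactly right.
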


\begin{proof}
For $s \in (0,T_1]$ it follows from Lemma \ref{lem:polarcoord} that
\begin{align}
\| p_s(x_0,\cdot)\|^q_{L^q(X)} \leq \,& c_3^q s^{-\frac{nq}{2}}  \int_X \exp\left(-\frac{q d^2(x_0,y)}{c_4 s}\right)d\m(y)\\
= \,& c_3^q s^{-\frac{nq}{2}} \frac{2q}{c_4 s} \int_0^\infty \m(B_r(x_0)) r e^{-\frac{qr^2}{c_4 s}} dr\\
\leq \,& c_1 c_3^q s^{-\frac{nq}{2}} \frac{2q}{c_4 s} \int_0^\infty  r^{n+1} e^{-\left(\frac{q}{c_4 s}-c_2\right) r^2} dr\\
= \, &  c_1 c_3^q s^{\frac{n}{2}(1-q)} \frac{q}{c_4 } \Gamma\left(\frac{n}{2}+1\right) \left(\frac{q-c_2sc_4}{c_4 }\right)^{-\left(\frac{n}{2}+1\right)}\\
\leq \, &  \frac{c_1 c_3^q q }{ c_4} \Gamma\left(\frac{n}{2}+1\right) \left(\frac{q-c_2T_1c_4}{c_4}\right)^{-\left(\frac{n}{2}+1\right)} s^{\frac{n}{2}(1-q)}
\end{align}
for which we used the assumption $T_1<\frac{q}{c_4 c_2}$. Consequently inequality \eqref{eq:lemfirst} is proved with
\begin{equation}\label{eq:C1}
C_1(q,c_1,c_2,c_3,c_4,n,T_1) := \left(\frac{c_1 c_3^q q \Gamma(\frac{n}{2}+1)}{ c_4\left(\frac{q-c_2T_1c_4}{c_4}\right)^{\frac{n}{2}+1}}\right)^{\frac{1}{q}}.
\end{equation}
It follows that if $\frac{1}{q} + \frac{2}{mq'}>1$ then
\begin{align}
\|p.(x_0,\cdot)\|_{L^q_{q'}([0,t]\times X)} \leq \,& C_1 \left(\int_0^t s^{\frac{nq'}{2}(\frac{1}{q}-1)}ds\right)^{1/q'}\\
\leq \,& C_1 \left({\frac{nq'}{2}\left(\frac{1}{q}-1\right)+1}\right)^{-1/q'} t^{\frac{n}{2}(\frac{1}{q}-1)+\frac{1}{q'}}
\end{align}
and consequently inequality \eqref{eq:lemsecond} is proved with
\begin{equation}
C_2(q,q',c_1,c_2,c_3,c_4,n,T_1) := C_1 \left({\frac{nq'}{2}\left(\frac{1}{q}-1\right)+1}\right)^{-1/q'}
\end{equation}
as required.
\end{proof}

\begin{theorem}\label{thm:lpestv}
Suppose $V$ is non-negative with $\|V\|_{L^p(X)} < \infty$ for some $p>1$. Assume (\textbf{A1}) and (\textbf{A2}) and suppose $\gamma= 1-\frac{n}{2p} >0$. Denote by $T_0$ the constant determined by assumption (\textbf{A1}), choose some $T_1 < T_0 \wedge \frac{q}{c_4 c_2}$ where $q = \frac{p}{p-1}$ and define the constant $C_1$ by \eqref{eq:C1}. Then for each $\rho>0$ it follows that
\begin{equation}
\E^x \left[\exp\left(\int_0^t V(X_s)ds\right)\right] \leq \exp\left({\rho\left( {t \max\Bigg\lbrace \frac{1}{T_0}, \frac{c_4 c_2}{q}, \left(\frac{C_1\|V\|_{L^p(X)}}{\gamma (1-e^{-\rho})}\right)^{1/\gamma}\Bigg\rbrace +1}\right)}\right)
\end{equation}
for all $t\geq 0$ and all $x \in \supp \m$.
\end{theorem}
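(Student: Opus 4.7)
The plan is to apply a Khasminskii-type argument, for which the central ingredient is an $L^\infty$ bound on $x \mapsto \E^x[\int_0^{t_0} V(X_s)\,ds]$ for a suitably small $t_0>0$. I would first produce such a bound from the heat kernel estimate of Proposition \ref{prop:lqhk}.

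First step: for any $s \in (0,T_1]$ and $x \in \supp\m$, by H\"older's inequality with conjugate exponents $p$ and $q=p/(p-1)$,
\begin{equation}
\E^x\left[V(X_s)\right] = \int_X p_s(x,y)V(y)\,d\m(y) \leq \|p_s(x,\cdot)\|_{L^q(X)}\|V\|_{L^p(X)}.
\end{equation}
Inequality \eqref{eq:lemfirst} of Proposition \ref{prop:lqhk} then gives $\|p_s(x,\cdot)\|_{L^q(X)}\leq C_1 s^{-n/(2p)} = C_1 s^{\gamma-1}$, since $\tfrac{n}{2}(\tfrac{1}{q}-1) = -\tfrac{n}{2p}$ and $\gamma = 1-\tfrac{n}{2p}$. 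Integrating in $s$ and using $\gamma>0$ yields
\begin{equation}
\sup_{x \in \supp \m}\E^x\!\left[\int_0^{t_0}V(X_s)\,ds\right] \leq \frac{C_1\|V\|_{L^p(X)}}{\gamma}\,t_0^{\gamma}
\end{equation}
for every $t_0 \in (0,T_1]$.

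Second step: choose $t_0$ so that the right-hand side equals $1-e^{-\rho}$, namely
\begin{equation}
t_0 := \min\!\left\{T_0,\ \tfrac{q}{c_4 c_2},\ \left(\tfrac{\gamma(1-e^{-\rho})}{C_1\|V\|_{L^p(X)}}\right)^{1/\gamma}\right\},
\end{equation}
so that $\sup_x \E^x[\int_0^{t_0}V(X_s)\,ds] \leq 1-e^{-\rho} =: \alpha < 1$. I would then invoke Khasminskii's lemma, the short proof of which consists of expanding the exponential as a series, using the Markov property and induction to get $\E^x[(\int_0^{t_0}V(X_s)\,ds)^m]\leq m!\,\alpha^m$, and summing the geometric series. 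This yields
\begin{equation}
\sup_{x \in \supp \m}\E^x\!\left[\exp\!\left(\int_0^{t_0}V(X_s)\,ds\right)\right] \leq \frac{1}{1-\alpha} = e^{\rho}.
\end{equation}

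Third step: for general $t \geq 0$, write $t \leq (k+1)t_0$ where $k = \lfloor t/t_0\rfloor$, so $k+1 \leq t/t_0 + 1$, and iterate by the Markov property on the $k+1$ subintervals of length at most $t_0$. Conditioning successively at times $t_0,2t_0,\ldots,k t_0$ and using the bound above at each step gives
\begin{equation}
\E^x\!\left[\exp\!\left(\int_0^{t}V(X_s)\,ds\right)\right] \leq e^{\rho(k+1)} \leq \exp\!\left(\rho\left(\tfrac{t}{t_0}+1\right)\right).
\end{equation}
Since $1/t_0$ equals the maximum of the three reciprocals appearing in the definition of $t_0$, this is exactly the claimed inequality.

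I expect the only subtle point to be the Khasminskii step and its Markov-iteration, which must use the bound $\sup_x \E^x[\int_0^{t_0}V(X_s)ds]\leq \alpha$ uniformly (so that conditioning on $X_{jt_0}$ and using the Markov property reproduces the same bound on each subinterval); this is why it was important in the first step to take a supremum in $x$. The heat kernel input and the calculus are otherwise routine.
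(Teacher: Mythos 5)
Your proposal is correct and follows essentially the same route as the paper: Hölder's inequality against the $L^q$ heat-kernel bound of Proposition \ref{prop:lqhk}, Khasminskii's lemma on a short time interval chosen so that the expected additive functional is at most $1-e^{-\rho}$, and iteration via the Markov property with $\lfloor t/t_0\rfloor+1\leq t/t_0+1$. The only cosmetic difference is that the paper takes $T_2$ strictly below the minimum (as a factor $\beta<1$ of it) and lets $\beta\uparrow 1$ at the end, whereas you take $t_0$ equal to the minimum directly; both yield the stated constant.
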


\begin{proof}
Choose $T_2>0$ sufficiently small so that
\begin{equation}
T_2 < T_0 \wedge \frac{q}{c_4 c_2} \wedge \left(\frac{\gamma (1-e^{-\rho})}{C_1\|V\|_{L^p(X)}}\right)^{1/\gamma}.
\end{equation}
By H\"{o}lder's inequality, Tonelli's theorem and Proposition \ref{prop:lqhk} we have
\begin{align}
\E^{x} \left[ \int_0^{t} V(X_s)ds\right] \leq\,&  \|V\|_{L^p(X)} \int_0^t \|p_s(x,\cdot)\|_{L^q(X)}ds \\
\leq\,&C_1 \|V\|_{L^p(X)} \frac{t^\gamma}{\gamma}\label{eq:hold0} \\ 
\leq\,& 1-e^{-\rho}\label{eq:hold}
\end{align}
for all $t \in [0,T_2]$. As an aside, note that if $\supp \m = X$ then, by inequality \eqref{eq:hold0}, such $V$ belong, under the present assumptions, to the \emph{Kato class} of $(X,d,\m)$, as in \cite{Simon1982}. The canonical diffusion has the property that if $A$ is a Borel subset with $\m(A)=0$ then $\mathbb{P}^x(X_t \in A)=0$ for all $t>0$. Therefore Khasminksii's lemma, as in \cite{FitzsimmonsPitman1999}, implies for a non-negative measurable function $V:X \rightarrow \R$ that if there exists a constant $0\leq \alpha <1$ such that
\begin{equation}
\sup_{x \in \supp \m} \E^{x} \left[\int_0^{t} V(X_s) ds\right] \leq \alpha
\end{equation}
then
\begin{equation}
\E^{x}\left[ \exp\left(\int_0^{t} V(X_s) ds\right)\right] \leq \frac{1}{1 - \alpha}
\end{equation}
for all $t\geq 0$ and $x \in \supp \m$. Consequently, by \eqref{eq:hold}, we have
\begin{equation}
\E^{x} \left[\exp\left(\int_0^{t} V(X_s) ds\right)\right] \leq e^{\rho}
\end{equation}
for all $t \in [0,T_2]$. By the Markov property, applied iteratively, we see that
\begin{align}
\E^x\left[\exp\left({\int_0^t V(X_s)ds}\right)\right] \leq\,& \left(\sup_{x \in \supp \m}\E^x\left[\exp\left({\int_0^{T_2} V(X_s)ds}\right)\right] \right)^{\floor*{\frac{t}{T_2}}+1}\\[2mm]
\leq\,& \exp\left({{\rho\left( {\frac{t}{T_2}+1}\right)}}\right)
\end{align}
for any $T_2$ of the form
\begin{equation}
T_2 = \beta\left( T_0 \wedge \frac{q}{c_4 c_2} \wedge \left(\frac{\gamma (1-e^{-\rho})}{C_1\|V\|_{L^p(X)}}\right)^{1/\gamma}\right)
\end{equation}
with $\beta \in (0,1)$. The result follows, by letting $\beta \uparrow 1$.
\end{proof}

Suppose now, more generally, that $V$ is a measurable function that can be decomposed as $V = V_1 - V_2$ with $V_1$ bounded below and $V_2$ non-negative with $\|V_2\|_p := \|V_2\|_{L^p(X)} < \infty$ for some $p>1$. Assume (\textbf{A1}) and (\textbf{A2}) and suppose $\gamma = 1-\frac{n}{2p} >0$. Denote by $T_0$ the constant determined by assumption (\textbf{A1}), set $q = \frac{p}{p-1}$ and choose some $T_1 < T_0 \wedge \frac{q}{c_4 c_2}$. Then, according to Theorem \ref{thm:lpestv}, for each $\rho>0$ there exists a positive constant $C_3(\rho):= C_3(q,c_1,c_2,c_3,c_4,n,T_0,T_1,\rho)$ such that
\begin{equation}\label{eq:intbound}
\E^x \left[\exp\left(\int_0^t V_2(X_s)ds\right)\right] \leq \exp\left( \rho\left(  t\, C_3(\rho) \left(1\vee\|V_2\|_{p}^{1/\gamma}\right) +1 \right)\right)
\end{equation}
for all $t\geq 0$ and all $x \in \supp \m$. Following \cite{Carmona1978}, the exit time estimate Theorem \ref{thm:etemmm} together with the bound \eqref{eq:intbound} can be used to derive an estimate on the expectation of the functional $e^{-\int_0^t V(X_s)ds}$. For each $a >0$ set $V_1^a(x) := \inf \{ V_1(y): d(y,x) \leq a\}$.

\begin{corollary}\label{cor:upperbndcor}
Assume (\textbf{A1}) and (\textbf{A2}) and suppose $1-\frac{n}{2p} >0$. Then for each $\rho>0$ there exists a positive constant $C_3(\rho)$ such that
\begin{align}
\E\left[\exp\left({-\int_0^t V(X_s)ds}\right)\right] \leq\,& \exp\left({\frac{\rho}{2}\left(  t\, C_3(\rho) \left(1\vee2\|V_2\|_{p}^{1/\gamma}\right) +1 \right)}\right)\\
& \times \left(e^{-2tV_1^a(x)} + e^{-2t \inf V_1} (1-\delta)^{-\frac{{N+\lambda}}{2}}\exp\left({-\frac{\delta a^2}{2t\Lambda(t)} }\right)\right)^{1/2}
\end{align}
for all $\delta \in (0,1)$, $t,a>0$ and $x \in \supp \m$, where $\lambda := \frac{1}{2}\sqrt{(N-1)K^-}.$
\end{corollary}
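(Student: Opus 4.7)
The plan is to combine Theorem \ref{thm:lpestv}, in the form \eqref{eq:intbound}, with the exit time estimate Theorem \ref{thm:etemmm} via the classical Carmona splitting argument of \cite{Carmona1978}. The first step is to apply the Cauchy--Schwarz inequality to decouple the contributions of $V_1$ and $V_2$:
\begin{equation}
\E^x\left[\exp\left(-\int_0^t V(X_s)\,ds\right)\right] \leq \E^x\left[\exp\left(2\int_0^t V_2(X_s)\,ds\right)\right]^{1/2} \E^x\left[\exp\left(-2\int_0^t V_1(X_s)\,ds\right)\right]^{1/2}.
\end{equation}
This isolates the non-negative, $L^p$-integrable piece $V_2$ in a factor amenable to the Feynman--Kac estimate \eqref{eq:intbound}, and the bounded-below piece $V_1$ in a factor where only pathwise lower bounds are required.

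For the first factor, I would apply \eqref{eq:intbound} with $V_2$ replaced by $2V_2$. Since $\|2V_2\|_p^{1/\gamma} = 2^{1/\gamma}\|V_2\|_p^{1/\gamma}$ and $\gamma \in (0,1]$, the identity $2^{1/\gamma} = 2 \cdot 2^{1/\gamma - 1}$ with $2^{1/\gamma - 1} \geq 1$ shows that $(1 \vee 2^{1/\gamma}\|V_2\|_p^{1/\gamma}) \leq 2^{1/\gamma-1}(1 \vee 2\|V_2\|_p^{1/\gamma})$, and the excess factor $2^{1/\gamma - 1}$ may be absorbed into a redefinition of the constant $C_3(\rho)$. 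Taking a square root then produces exactly the leading exponential in the claimed bound.

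For the second factor, I would use the Carmona localization trick. Fix $a > 0$, let $\tau_a := \inf\{s \geq 0 : d(X_s,x) \geq a\}$, and split the expectation according to whether $\tau_a > t$ or $\tau_a \leq t$:
\begin{equation}
\E^x\left[\exp\left(-2\int_0^t V_1(X_s)\,ds\right)\right] \leq e^{-2tV_1^a(x)}\,\mathbb{P}^x\{\tau_a > t\} + e^{-2t\inf V_1}\,\mathbb{P}^x\{\tau_a \leq t\}.
\end{equation}
On $\{\tau_a > t\}$ the trajectory remains in $B_a(x)$, so $V_1(X_s) \geq V_1^a(x)$ throughout $[0,t]$, while on $\{\tau_a \leq t\}$ one uses merely the global bound $V_1 \geq \inf V_1$. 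Bounding $\mathbb{P}^x\{\tau_a > t\} \leq 1$ trivially and applying Theorem \ref{thm:etemmm} with basepoint $x$ itself (so that the term $r^2(x)=0$ in the Gaussian-type prefactor disappears), we obtain $\mathbb{P}^x\{\tau_a \leq t\} \leq (1-\delta)^{-(N+\lambda)/2}\exp(-\delta a^2/(2t\Lambda(t)))$ for every $\delta \in (0,1)$. Taking the square root of the resulting sum and multiplying by the bound from the previous paragraph delivers the inequality of the corollary.

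The only subtle point is the transfer of Theorem \ref{thm:etemmm} from its stated reference point $x_0$ to the arbitrary starting point $x$. Since the Kuwada--Kuwae It\^{o} formula \eqref{eq:itomms} holds for the distance from any fixed point and the Laplacian-comparison term built into it depends only on the ambient $\mathrm{RCD}^*(K,N)$ structure rather than on the choice of basepoint, Theorem \ref{thm:etemmm} applies verbatim with $x_0$ replaced by $x$; once this is recognised, assembling the two factors via Cauchy--Schwarz completes the argument.
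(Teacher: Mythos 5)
Your argument is correct and matches the paper's proof essentially step for step: square the expectation, apply Cauchy--Schwarz, bound the $V_2$-factor via \eqref{eq:intbound} with $2V_2$, localize the $V_1$-factor on the event $\{\sup_{s\in[0,t]} d(X_s,x) < a\}$ and its complement, and invoke Theorem \ref{thm:etemmm} with the reference point taken to be the starting point $x$ so that the $r^2(x)$ term in the exponent vanishes. You are in fact slightly more scrupulous than the paper, which leaves tacit both the absorption of $2^{1/\gamma-1}$ into a redefined $C_3(\rho)$ and the change of reference point from $x_0$ to $x$ in the exit time estimate.
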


\begin{proof}
Using the bound \eqref{eq:intbound} obtained in the previous subsection, we see that
\begin{align}
&\E\left[\exp\left({-\int_0^t V(X_s)ds}\right)\right]^2\\
\leq\,& \exp\left({\rho\left(  t\, C_3(\rho) \left(1\vee 2\|V_2\|_{p}^{1/\gamma}\right) +1 \right)}\right) \E^x \left[ e^{-2\int_0^t V_1(X_s) ds}\right]\\[2mm]
\leq \,& \exp\left({\rho\left(  t\, C_3(\rho) \left(1\vee 2\|V_2\|_{p}^{1/\gamma}\right) +1 \right)}\right) \bigg( \E^x \left[ e^{-2\int_0^t V_1(X_s) ds}\, \mathbf{1}_{\lbrace \sup_{s\in \left[0,t\right]} d(X_s,x) < a\rbrace} \right]\\[2mm]
& \hspace{20mm}+ \E^x \left[ e^{-2\int_0^t V_1(X_s) ds}\, \mathbf{1}_{\lbrace \sup_{s\in \left[0,t\right]} d(X_s,x) \geq a\rbrace} \right]\bigg)\\[2mm]
\leq \,&  \exp\left({\rho\left(  t\, C_3(\rho) \left(1\vee 2\|V_2\|_{p}^{1/\gamma}\right) +1 \right)}\right) \bigg(e^{-2tV_1^a(x)} + e^{-2t \inf V_1} \mathbb{P}^x \bigg\lbrace \sup_{s\in \left[0,t\right]} d(X_s,x) \geq a\bigg\rbrace\bigg)
\end{align}
for each $t\geq 0$ and $a >0$. By Theorem \ref{thm:etemmm} we have
\begin{equation}
\mathbb{P}^x \bigg\lbrace \sup_{s\in \left[0,t\right]} d(X_s,x) \geq a\bigg\rbrace \leq (1-\delta)^{-\frac{{N+\lambda}}{2}}\exp\left(-\frac{\delta a^2}{2t{\Lambda}(t)}\right)
\end{equation}
for all $t>0$ and $\delta \in (0,1)$ which by substituting yields the desired inequality.
\end{proof}

Observe if $f \in L^2(X;\m)$ then H\"{o}lder's inequality, Proposition \ref{prop:lqhk} and Theorem \ref{thm:lpestv} imply, under the assumptions of Theorem \ref{thm:lpestv}, that for each $t>0$ the map
\begin{equation}
f \mapsto \E^x\left[f(X_t) e^{-\int_0^t V(X_s)ds}\right]
\end{equation}
is bounded on $L^2(X;\m) \rightarrow L^\infty(X;\m)$.

\subsection{Upper bounds for Schr\"{o}dinger eigenfunctions}\label{ss:upperbnds}

As pointed out by a referee, under the present assumptions $V$ belongs to the Kato class of $(X,d,\mathfrak{m})$ (see the proof of Theorem \ref{thm:lpestv}), and so a self-adjoint extension of $-\tfrac{1}{2}\Delta + V$ can be defined by quadratic form methods, as in \cite{Guneysu2019}. Given an eigenfunction $\varphi \in \mathcal{D}\left(-\tfrac{1}{2}\Delta + V\right)$, satisfying
\begin{equation}
-\left(\tfrac{1}{2}\Delta - V\right)\varphi = E \varphi
\end{equation}
for some $E \in \mathbb{R}$, it follows that $\varphi$ is given by the Feynman-Kac formula
\begin{equation}\label{eq:fk}
\varphi(x) = e^{tE} \E^x \left[ \varphi(X_t) \exp\left(-\int_0^t V(X_s) ds\right]\right)
\end{equation}
for all $t \geq 0$. This formula can now be used to estimate $\varphi$. In particular, following \cite{Carmona1978}, we observe that certain growth conditions on $V$ imply pointwise decay estimates for the eigenfunction. For a function $f$ and $a>0$ we set
\begin{equation}
f^a(x) := \inf \lbrace f(y) : y \in B_a(x)\rbrace.
\end{equation}
Then, for example, we observe that if $\lim_{d(x,x_0) \rightarrow \infty} V_1(x) = \infty$ and if $V_1$ satisfies, for some positive function $a$ on $X$, the condition
\begin{equation}
V_1^a(x) \geq \alpha V_1(x)
\end{equation}
for some positive constant $\alpha$ and for $x$ outside a compact set, then there must exist positive constants $c$ and $C$, depending on $E$ and $\|V_2\|_p$, such that
\begin{equation}
|\varphi(x)| \leq C \|\varphi\|_\infty e^{-c a(x)\sqrt{V_1(x)}}
\end{equation}
for all $x$ outside the compact set with $x\in \supp \m$. Indeed, this follows from the representation \eqref{eq:fk} and Corollary \ref{cor:upperbndcor} by setting $t(x) = a(x) V_1^{-\frac{1}{2}}(x)$ and using the fact that $\Lambda(t) \geq 1$ for all $t \geq 0$. We also have the following proposition:

\begin{proposition}
Set $\lambda := \frac{1}{2}\sqrt{(N-1)K^-}$. Assume (\textbf{A1}) and (\textbf{A2}), suppose $1-\frac{n}{2p} >0$. If for some $x_0 \in X$ we have
\begin{equation}
V(x) \geq \gamma d^{2m}(x,x_0)
\end{equation}
outside a compact set for some constants $\gamma>0$ and $m\geq 1$, then for all
\begin{equation}
\theta < \frac{\sqrt{\gamma} m^m}{2(m+1)^{m+1}}
\end{equation}
there exists a constant $C>0$ such that
\begin{equation}\label{eq:decay}
|\varphi(x)| \leq  C \|\varphi\|_\infty e^{- \theta d^{m+1}(x,x_0)}
\end{equation}
for all  $x \in \supp \m$.
\end{proposition}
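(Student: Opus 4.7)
The plan is to combine the Feynman--Kac formula \eqref{eq:fk} with the bound from Corollary \ref{cor:upperbndcor}, then tune the parameters $t$ and $a$ as functions of $r:=d(x,x_0)$. Writing $V=V_1-V_2$ with $V_2\geq 0$, the hypothesis gives $V_1\geq V\geq \gamma d^{2m}(\cdot,x_0)$ outside a compact set, and the triangle inequality yields $V_1^a(x)\geq \gamma(r-a)^{2m}$ whenever $B_a(x)$ lies outside this compact set, which is automatic once $r$ is large enough and $a<r$.

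I would then take $a=\alpha r$ with $\alpha\in(0,1)$ and $t=c\,r^{1-m}$, substitute into Corollary \ref{cor:upperbndcor}, and bound the square root of the sum via $(A+B)^{1/2}\leq A^{1/2}+B^{1/2}$. The two competing exponents become
\[
tV_1^a(x)\geq c\gamma(1-\alpha)^{2m}\,r^{m+1},\qquad \frac{\delta a^2}{4t\Lambda(t)}=\frac{\delta\alpha^2}{4c\Lambda(t)}\,r^{m+1}.
\]
Because $m\geq 1$, $t$ stays bounded (and $t\to 0$ when $m>1$, whence $\Lambda(t)\to 1$), so the multiplicative factors $e^{tE}$, $\exp\bigl(\tfrac{\rho}{2}(tC_3M+1)\bigr)$, $e^{-t\inf V_1}$, and $(1-\delta)^{-(N+\lambda)/4}$ all remain bounded uniformly in $r$; I absorb them into $C$.

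The desired decay $e^{-\theta r^{m+1}}$ requires both exponents to be at least $\theta r^{m+1}$, which is feasible precisely when $\theta^2\leq \delta\gamma\,\alpha^2(1-\alpha)^{2m}/(4\Lambda(t))$. A standard calculus exercise shows that $\alpha(1-\alpha)^m$ is maximized on $(0,1)$ at $\alpha=1/(m+1)$ with value $m^m/(m+1)^{m+1}$, giving the admissible range $\theta\leq \sqrt{\delta\gamma}\,m^m/\bigl(2(m+1)^{m+1}\sqrt{\Lambda(t)}\bigr)$. Simultaneously taking $\delta$ close to $1$ and using $\Lambda(t)\to 1$ reproduces the advertised threshold $\sqrt\gamma\,m^m/\bigl(2(m+1)^{m+1}\bigr)$.

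The principal obstacle will be controlling the factor $\Lambda(t)$ when $K<0$ (so $\lambda>0$). For $m>1$ this is automatic since $t = cr^{1-m}\downarrow 0$ makes $\Lambda(t)\to 1$; for $m=1$ the scaling leaves $t=c$ constant in $r$, so one must take $c$ small (exploiting continuity of $\Lambda$ at $0$) and verify that every $\theta$ strictly below $\sqrt\gamma/8$ remains compatible with $\delta<1$ in the balance condition. Once the decay is established for $r\geq r_0$ for some $r_0$, the bound extends to all of $\supp\m$ by inflating $C$, using the boundedness of $\varphi$ (which follows from ultracontractivity of $P_t$, or directly from \eqref{eq:fk} combined with \eqref{eq:intbound} and Cauchy--Schwarz).
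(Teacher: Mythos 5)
Your proposal follows essentially the same route as the paper: Feynman--Kac together with Corollary \ref{cor:upperbndcor}, the choices $a=\alpha\, d(x,x_0)$ and $t\propto d^{1-m}(x,x_0)$, the reverse triangle inequality for $V_1^a$, and optimization of the minimum of the two competing exponents over $\alpha$ and the time-scale constant, which yields exactly the threshold $\sqrt{\gamma}\,m^m/(2(m+1)^{m+1})$. The one point you flag --- controlling $\Lambda(t)$ when $m=1$ and $K<0$, where $t$ stays bounded away from $0$ and the balance condition genuinely competes with $\Lambda(t)>1$ --- is a real subtlety that the paper itself glosses over (it simply drops the $\Lambda(t)$ factor and fixes $\delta=\tfrac12$), so your more careful treatment is, if anything, an improvement, though the verification you defer there still needs to be carried out.
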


\begin{proof}
Since $\varphi$ is bounded, it suffices to prove \eqref{eq:decay} for $d(x,x_0)$ large enough. For $\beta >0$ and $0 < \alpha < 1$ set $a = \alpha d(x,x_0)$ and $t = \beta \gamma^{-\tfrac{1}{2}} d^{-(m-1)}(x,x_0)$ to obtain from Corollary \ref{cor:upperbndcor} that
\begin{align}
|\varphi(x)|\leq\,& \|\varphi\|_\infty  \exp\left({ tE+ \frac{\rho}{2}\left(  t\, C_3(\rho) \left(1\vee2\|V_2\|_{p}^{1/\gamma}\right) +1 \right)}\right)\\
& \hspace{20mm} \times \left(e^{-2tV^a(x)} + (1-\delta)^{-\frac{{N+\lambda}}{2}}\exp\left(-\frac{\delta \sqrt{\gamma} \alpha^2}{2 \beta}d^{m+1}(x,x_0) \right)\right) 
\end{align}
for all $\delta \in (0,1)$. Then note, for this choice of $a$, by the reverse triangle inequality
\begin{align}
V^a(x) = \gamma \inf_{y \in B_a(x)} d^{2m}(y,x_0)\geq \gamma \inf_{y \in B_a(x)} |d(y,x)-d(x,x_0)|^{2m}= \gamma (1-\alpha)^{2m} d^{2m}(x,x_0)
\end{align}
and therefore, taking for example $\delta = \tfrac{1}{2}$, we find
\begin{align}
|\varphi(x)|\leq\,& \|\varphi\|_\infty 2^{\frac{{N+\lambda}}{2}} \exp\left({tE+ \frac{\rho}{2}\left(  t\, C_3(\rho) \left(1\vee2\|V_2\|_{p}^{1/\gamma}\right) +1 \right)}\right)\\
&\hspace{10mm}\times \left(\exp\left(-\beta \sqrt{\gamma} (1-\alpha)^{2m} d^{m+1}(x,x_0)\right)+ \exp\left(-\frac{\sqrt{\gamma} \alpha^2}{4 \beta}d^{m+1}(x,x_0) \right)\right)\\
\leq\,& \|\varphi\|_\infty 2^{\frac{{N+\lambda}}{2}} \exp\left(\beta \gamma^{-\frac{1}{2}} d^{-(m-1)}(x,x_0)\left(E + \frac{\rho}{2} C_3(\rho) \left(1\vee2\|V_2\|_{p}^{1/\gamma}\right)\right)\right)\\
&\hspace{20mm}\times \exp\left(\frac{\rho}{2}-\theta d^{m+1}(x,x_0)\right)
\end{align}
for any
\begin{equation}
\theta < \min\bigg\lbrace \beta \sqrt{\gamma} (1-\alpha)^{2m},\frac{\sqrt{\gamma} \alpha^2}{4 \beta}\bigg\rbrace.
\end{equation}
Since
\begin{equation}
\max \bigg\lbrace \min\bigg\lbrace \beta \sqrt{\gamma} (1-\alpha)^{2m},\frac{\sqrt{\gamma} \alpha^2}{4 \beta}\bigg\rbrace : \alpha \in (0,1), \beta >0 \bigg\rbrace = \frac{\sqrt{\gamma} m^m}{{2(m+1)}^{(m+1)}}
\end{equation}
and since $m\geq 1$, it follows that there exists a constant such that the claim is satisfied.
\end{proof}

Returning to the setting of Section \ref{sec:subriem}, note that similar eigenfunction estimates, for operators of the form $\tfrac{1}{2}\Delta_{\mathcal{H}} - V$, can also be obtained (under certain conditions), by Theorem \ref{thm:exittime}.

\section*{Acknowledgements}
This work has been supported by Fonds National de la Recherche
Luxembourg (FNR), project O14/7628746 GEOMREV.

\end{document}